\numberwithin{equation}{section}
\newtheorem{theo}{Theorem}[section]
\newtheorem{coro}[theo]{Corollary}
\newtheorem{lemm}[theo]{Lemma}
\newtheorem{defi}[theo]{Definition}
\newtheorem{prop}[theo]{Proposition}
\newtheorem{obs}[theo]{Observation}
\theoremstyle{definition}
\newtheorem{exam}[theo]{Example}
\theoremstyle{remark}
\newtheorem{remark}[theo]{Remark}
\newcommand{\od}{\mathrm{d}}
\newcommand{\E}{{\mathbb{E}}}
\renewcommand{\P}{{\mathbb{P}}}
\newcommand{\R}{{\mathbb{R}}}
\newcommand{\N}{{\mathbb{N}}}
\newcommand{\cA}{\mathcal{A}}
\newcommand{\cB}{\mathcal{B}}
\newcommand{\cF}{\mathcal{F}}
\newcommand{\cP}{\mathcal{P}}
\newcommand{\ind}{\mathbbm{1}}
\newcommand{\gindset}{\mathcal{I}}
\newcommand{\Rplus}{[0, \infty)}
\newcommand{\tmax}{T_{\rm{max}}}
\newcommand{\new}[1]{#1}
\newcommand{\newer}[1]{#1}
\newcommand{\newest}[1]{#1}
\newcommand{\Newest}[1]{#1}
\newcommand{\NEWEST}[1]{#1}
\newcommand{\revised}[1]{#1}
\begin{document}

\title[Mean field SDEs with a discontinuous diffusion coefficient]{Mean field stochastic differential equations with a discontinuous diffusion coefficient}
	
\author[Jani Nyk\"{a}nen]{Jani Nyk\"{a}nen$^1$}

\maketitle

\begin{abstract}
%
%
We study $\R^d$-valued mean field stochastic differential equations with a diffusion coefficient depending on the $L_p$-norm of the process in a discontinuous way.  We show that under a strong drift there exists a unique global strong solution and consider typical cases where the existence of a global \new{solution} fails.\end{abstract}

{\small{
\vspace{1em}
{\noindent \textit{Keywords:} mean field stochastic differential equation, discontinuous diffusion coefficient, existence and non-existence of strong solutions in $L_p$  \\
\noindent \textit{Mathematics Subject Classification:} 60H10, 60H30
}
{\noindent
\footnotetext[1]{Department of Mathematics and Statistics, University of
Jyv\"askyl\"a, Finland. \\ \hspace*{1.5em}
  {\tt jani.m.nykanen@jyu.fi}}
}
}}

\tableofcontents

\section{Introduction}
%
%
We study, for $p \geq 2$, $\R^d$-valued mean field stochastic differential equations of the form
\begin{equation*}
X_t = x_0 + \int_0^t \sigma(s, X_s, \P_{X_s}, \left\Vert X_s - z \right\Vert_{L_p(\Omega)} ) \od B_s + \int_0^t b(s, X_s, \P_{X_s}) \od s, \quad t \geq 0,
\end{equation*}
where $x_0 \in L_p(\Omega, \cF, \P)$ is independent of the $d$-dimensional Brownian motion $(B_t)_{t \geq 0}$ and $z \in \R^d$ is a fixed reference point. We consider \new{an} infinite time horizon and assume that the coefficient functions $\sigma$ and $b$ are sufficiently regular in the first three parameters, but the diffusion coefficient $\sigma$ is discontinuous in the $L_p$-parameter. We assume that the discontinuity is of type
\begin{equation} \label{disc_type}
\sigma(t, x, \mu, \newest{\alpha}) = \sum_{i \in \gindset} \ind_{ \left\{ \newest{\alpha}^p \in \cA_i \right\} } \sigma_i(t, x, \mu),
\end{equation}
where $\gindset \subseteq \N$ is a nonempty index set and $\left( \cA_i \right)_{i \in \gindset}$ is a family of Borel sets on $\Rplus$.

To motivate this model, assume that $p = 2$ and consider, for example, \newer{a} phase transition in a large system of particles \new{with} weak interaction. We use our equation to model the position of the particles. We assume that the particles are initially distributed around the reference point $z$. The parameter $\left\Vert X_t - z \right\Vert_{L_2(\Omega)}$ describes the average distance from the reference point. Then, when the average distance gets larger, the density of the system decreases, and the other way around. A phase transition happens when the density of the system attains some critical value, and this affects the random movement. For example, if the density becomes too low, that is, there are \new{fewer} collisions between the particles, the random movement halts, so only the drift affects the movement of the particles. An important question is that what happens if the random movement is weak enough so that the drift can drive the particles closer \newest{or back} to the reference point, thus making the system change immediately the state again.

Mean field stochastic differential equations with irregular coefficients have been studied in various settings different \new{from} what we consider in this paper. Discontinuity in the drift coefficient under Wasserstein distance is considered in \cite{huang_discont_wasserstein}, and irregularity in the space variable in the drift coefficient is studied \cite{bauer_irregular_drift} and \cite{leobacher_discont_drift}. Existence of a solution under irregularity in the coefficients of the type
\begin{equation*}
\sigma(t, x, \mu) = \int_{\R^d} \bar{\sigma}(t, x, y) \od \mu(y), \quad b(t, x, \mu) = \int_{\R^d} \bar{b}(t, x, y) \od \mu(y),
\end{equation*}
which is sometimes called the true McKean-Vlasov case, is studied in \cite{mishura_true_mckean_vlasov} and \cite{stannat_mehri_lyapunov}. In \cite{zhang_discrete} it is proven that in this setting under sufficient assumptions one can obtain the existence of a solution even if the coefficients are discontinuous in the measure component with respect to weak convergence.

Without mean field interaction, that is, when the coefficients do not depend on the distribution, discontinuity in the diffusion coefficient is studied from different points of view for example in \cite{lejay_discontinuous_media}, \cite{kulik_deviation} and \cite{garzon_discontinuous_fractional}.

In this paper we focus on the uniqueness and existence of a strong solution. We discuss some typical \new{behavior} of equations with the type of discontinuity \newer{described in \eqref{disc_type}}. We prove that if we have a strong drift in the sense that it drives the particles away from the reference point $z$, then we always have a unique strong solution. \Newest{We show that under a sufficient condition this solution exists on the whole time interval $\Rplus$. Without assuming a strong drift a solution might not exist due to oscillating behavior, which is the topic of the remaining part of the paper.} \new{Concerning} the \newer{existence} results the main difference to \cite{zhang_discrete} is that the uniform ellipticity condition for the diffusion coefficient is not \new{required here}. \NEWEST{On the contrary, by not requiring this condition new effects are discovered.}

\NEWEST{\subsection{Structure of the paper.} We introduce the general setting and give the definition of an $L_p$-solution for our framework in Section 1.3. In Section 2 we consider the existence and uniqueness of an $L_p$-solution. Example \ref{exam_null_set_nonunique} illustrates a possible case when the uniqueness property can fail. This example is followed by Theorem \ref{theo_drift_away}, \revised{which states that} under a strong drift and some additional regularity assumptions there exists a unique solution. In Section 3 we study equations that have only finite lifetime because of an oscillating behavior, which can occur if the assumption on the strong drift is violated. 
}
\subsection{Notation}
%
%
We use the following notation:
\begin{itemize}
    \item \newer{$\left\Vert \cdot \right\Vert$ denotes the Euclidean norm on $\R^d$, or if} $A = [a_{i,j}]_{i,j=1}^d \in \R^{d \times d}$, then we use \newer{$\left\Vert \cdot \right\Vert$ to denote the Hilbert-Schmidt} norm 
    \begin{equation*}
        \left\Vert A \right\Vert = \left( \sum_{i, j = 1}^d \left| a_{i,j} \right|^2 \right)^{\frac{1}{2}}.
    \end{equation*}
    \revised{\item For $p \geq 1$ the space $L_p(\Omega, \cF, \P)$ contains all random variables $f : \Omega \to \R^d$ that satisfy $\E \left\Vert f \right\Vert^p < \infty$.}
    \new{\item If $X$ is a topological space, then $\cB(X)$ denotes the Borel $\sigma$-algebra of $X$}\newest{, generated by the open sets in $X$}.
    \revised{\item For $z \in \R^d$ we denote by $\delta_z$ the Dirac measure centered at $z \in \R^d$, that is, 
    \begin{equation*}
    \delta_z(B) = \begin{cases}
    \begin{aligned}
    & 1, \quad z \in B, \\
    & 0, \quad z \notin B
    \end{aligned}
    \end{cases}
    \end{equation*}
    for all $B \in \cB(\R^d)$.}
    \item For $p \geq 2$ we denote by $\cP_p(\R^d)$ the space of Borel probability measures on $\R^d$ with finite \newer{absolute} $p$-th moments, equipped with the $p$-Wasserstein distance \newest{
    \begin{equation} \label{wasserstein_alt_def}
    W_p(\mu, \nu)^p := \inf \E_{\P'} \left\Vert f - g \right\Vert^p,
    \end{equation}
    where the infimum is taken over all probability spaces $(\Omega', \cF', \P')$ \revised{such that there exist random variables $f, g \in L_p(\Omega', \cF', \P')$ with $\P'_f = \mu$ and $\P'_g = \nu$. The map $W_p$ is well-defined, see Remark \ref{rem_wass_existence} below.}
    }
    \item We denote the one-dimensional Lebesgue measure by $\lambda$.
    \item We denote by $\#M$ the cardinality of the set $M$.
    \item We use the convention $\N = \left\{ 1, 2, ... \right\}$.
    \item \newer{If $a, b \in \R$, then $a \wedge b = \min \left\{a, b\right\}$} \revised{and $a \vee b = \max \left\{a, b\right\}$.}
\end{itemize}

\revised{
\begin{remark} \label{rem_wass_existence}
In our definition of the $p$-Wasserstein distance given in \eqref{wasserstein_alt_def} we need to make sure that we are not taking an infimum over an empty set. Let $\mu, \nu \in \cP_p(\R^d)$. We choose the following probability space: 
\begin{equation*}
(\Omega', \cF', \P') = ( \R^d \times \R^d, \cB(\R^d \times \R^d), \mu \times \nu).
\end{equation*}
We denote by $\pi_x, \pi_y : \R^d \times \R^d \to \R^d$ the coordinate maps to the first and second coordinates, respectively, that is, $\pi_x(x, y) = x$ and $\pi_y(x, y) = y$ for all $(x, y) \in \R^d \times \R^d$. Clearly one has $\pi_x, \pi_y \in L_p(\R^d \times \R^d, \cB(\R^d \times \R^d), \mu \times \nu)$. Moreover, it holds that $\P'_{\pi_x} = \mu$ and $\P'_{\pi_y} = \nu$.

By taking the infimum over all probability spaces $(\Omega', \cF', \P')$ carrying random variables $f$ and $g$ with $\P_{f} = \mu$ and $\P_{g} = \nu$  we do not need to restrict ourselves to one atomless probability space for the definition.

\end{remark}
}
\subsection{General setting} \label{section_setting}
%
%
Assume a stochastic basis $(\Omega, \P, \cF, \left( \cF_t \right)_{t \geq 0} )$ that satisfies the usual \newer{conditions: the probability space $(\Omega, \cF, \P)$ is complete, the filtration $(\cF_t)_{t \in \Rplus}$ is right-continuous and $\cF_0$ contains all $\P$-null sets}.  Let $B = (B_t)_{t \geq 0}$ be a $d$-dimensional \newer{$(\cF_t)_{t \geq 0}$}\newest{-}Brownian motion. Let $\gindset \subseteq \N$ be a nonempty index set and let $p \geq 2$. \new{Suppose $\revised{\cF \otimes } \cB(\Rplus) \otimes \cB(\R^d) \otimes \cB(\cP_p(\R^d))$-measurable} functions
\new{
\begin{align*}
\sigma_i & : \revised{\Omega \times } [0, \infty) \times \R^d \times \cP_p(\R^d) \to \R^{d \times d}, \quad i \in \gindset, \\
b & :  \revised{\Omega \times } [0, \infty) \times \R^d \times \cP_p(\R^d) \to \R^d
\end{align*}
that are \newer{jointly} continuous in the space and measure components} for any fixed \revised{$(\omega, t) \in \Omega \times \Rplus$, and progressively measurable for any fixed $(x, \mu) \in \R^d \times \cP_p(\R^d)$}. Assume that the initial condition $x_0 \in L_p(\Omega, \cF, \P)$ is independent of $(B_t)_{t \geq 0}$, and let $z \in \R^d$ be a fixed reference point. 

Assume nonempty, pairwise disjoint Borel sets $\left( \cA_i \right)_{i \in \gindset}$ on $[0, \infty)$ that satisfy $\bigcup_{i \in \gindset} \cA_i = [0, \infty)$. We consider stochastic differential equations of the form
\begin{equation} \label{sde_general}
\begin{cases}
\begin{aligned}
& X_t = x_0 + \int_0^t \sum_{i \in \gindset} \ind_{\left\lbrace g(s) \in \cA_i \right\rbrace } \sigma_i(s, X_s, \P_{X_s}) \od B_s + \int_0^t b(s, X_s, \P_{X_s}) \od s, \\
& g(t) = \left\Vert X_t - z \right\Vert_{L_p(\Omega)}^p = \E \left\Vert X_t - z \right\Vert^p.
\end{aligned}
\end{cases}
\end{equation}
We call the function $g$ \textit{moment function.}

\new{
\begin{defi} \label{def_solution}
Recalling that $p \geq 2$, an $L_p$-solution to \eqref{sde_general} consists of a pair $\left(T, \left(X_t\right)_{t \in [0, T)} \right)$ such that the following conditions are met:
\begin{enumerate}[label=\rm{(\roman*)}]
    \item $T \in (0, \infty]$. \label{cond_sol_first}
    \item The process $X = (X_t)_{t \in [0, T)}$ has continuous sample paths and is adapted to the filtration $(\cF_t)_{t \in [0, T)}$.
    \item For any $S \in (0, T)$ one has 
    \begin{equation*}
        \E \sup_{t \in [0, S]} \left\Vert X_t - z \right\Vert^p < \infty.
    \end{equation*}
    \item For all $t \in [0, T)$ it holds that
    \begin{equation*}
    \E \left[ \int_0^t \left\Vert \sum_{i \in \gindset} \ind_{\left\{ \E \left\Vert X_u - z \right\Vert^p  \in \cA_i  \right\}} \sigma_i(u, X_u, \P_{X_u}) \right\Vert^2 \od u \right]^{\frac{p}{2}} < \infty
    \end{equation*}
    and 
    \begin{equation*}
    \E \left[ \int_0^t \left\Vert b(u, X_u, \P_{X_u}) \right\Vert \od u \right]^p < \infty.
    \end{equation*}
    \item The equation \eqref{sde_general} holds almost surely for all $t \in [0, T)$. \label{cond_sol_last}
\end{enumerate}
\newer{The pair $(T, (X_t)_{t \in [0, T)})$ is a strongly unique $L_p$-solution if for any other $L_p$-solution $(S, (Y_t)_{t \in [0, S)})$ one has $\P(X_t = Y_t) = 1$ for $t \in [0, \min \left\{ T, S \right\})$. Moreover, we say that $\tmax \in (0, \infty]$ is \newest{the} \textit{strong maximal lifetime} for the equation \eqref{sde_general} if 
\begin{enumerate}[label=\rm{(\alph*)}]
    \item there exists a strongly unique $L_p$-solution $(\tmax, (X_t)_{t \in [0, \tmax)})$, and 
    \item for any other $L_p$-solution $(S, (Y_t)_{t \in [0, S)})$ one has $S \leq \tmax$.
\end{enumerate}

}
\end{defi}


\begin{remark}
\newer{From our assumptions it follows that} the processes
\begin{equation*}
\left( \sum_{i \in \gindset} \ind_{\left\{ \E \left\Vert X_t - z \right\Vert^p  \in \cA_i  \right\}} \sigma_i(t, X_t, \P_{X_t}) \right)_{t \in [0, T)}
\end{equation*}
and
\begin{equation*}
\left(b(t, X_t, \P_{X_t}) \right)_{t \in [0, T)}
\end{equation*}
are progressively measurable.
\end{remark}

}

\newer{If it is clear from the context, then we might denote an $L_p$-solution just by $(X_t)_{t \in [0, T)}$ without mentioning the lifetime separately, especially if $T = \infty$. }

\section{Existence of \newer{an $L_p$-solution}} \label{section_existence}
%
%
We start by considering the existence and uniqueness of a strong solution. \revised{We recall that the processes $\left( \sigma_i(\cdot, t, x, \mu) \right)_{t \in \Rplus}$ and $\left( b(\cdot, t, x, \mu) \right)_{t \in \Rplus}$ are assumed to be progressively measurable for all $(x, \mu) \in \R^d \times \cP_p(\R^d)$}. For $p \geq 2$ we make the following \revised{additional} standard assumptions on the functions $\left( \sigma_i \right)_{i \in \gindset}$ and $b$:
\begin{enumerate} [label=\rm{\textbf{(S\arabic*)}}]

\item The functions $\left( \sigma_i \right)_{i \in \gindset}$ and $b$ are Lipschitz in the \revised{space} and \revised{measure} component uniformly in $t$ \revised{and $\omega$}, that is, \revised{there exist} constants $L_{\sigma_i} > 0$, $i \in \gindset$ and $L_b > 0$ such that
\begin{align*}
& \left\Vert \sigma_i(t, x, \mu) - \sigma_i(t, y, \nu) \right\Vert \leq L_{\sigma_i} \left( \left\Vert x - y \right\Vert + W_p(\mu, \nu) \right), \quad i \in \gindset, \\
& \left\Vert b(t, x, \mu) - b(t, y, \nu) \right\Vert \leq L_{b} \left( \left\Vert x - y \right\Vert + W_p(\mu, \nu) \right)
\end{align*}
for all \revised{$\omega \in \Omega$}, $t \in \Rplus$, $x, y \in \R^d$ and $\mu, \nu \in \cP_p(\R^d)$. \label{standard_assumption_1} \label{standard_assumption_first}
\item The functions $\left( \sigma_i \right)_{i \in \gindset}$ and $b$ satisfy a linear growth condition, that is, \revised{there exist} $K_{\sigma_i} > 0$, $i \in \gindset$, and $K_b > 0$ such that
\begin{align*}
& \left\Vert \sigma_i(t, x, \mu) \right\Vert \leq K_{\sigma_i} \left(1 + \left\Vert x  - z \right\Vert + W_p(\mu, \delta_z) \right), \quad i \in \gindset, \\
& \left\Vert b(t, x, \mu) \right\Vert \leq K_b \left(1 + \left\Vert x - z \right\Vert + W_p(\mu, \delta_z) \right)
\end{align*}
for all \revised{$\omega \in \Omega$}, $t \in \Rplus$ and $(x, \mu) \in \R^d \times \cP_p(\R^d)$. \label{standard_assumption_2} \label{standard_assumption_last}

\end{enumerate}\revised{
We choose these rather strong assumptions to ensure that all the special behavior is caused only by the discontinuity in the $L_p$-parameter, see the Discussion section at the end of the article.}

We recall a classical result concerning the existence and uniqueness of a strong solution. For a proof see \new{Theorem} 4.21 in \cite{mean_field_games_1}, where the result is proven for $p=2$ in finite time horizon. \newer{The proof can be generalized for any} $p \geq 2$, and the obtained solution can be extended to \newer{an} infinite time horizon in the usual way.
\begin{theo} \label{th_strong_existence}
Assume \ref{standard_assumption_first}-\ref{standard_assumption_last} \newest{and recall that $p \geq 2$}. Then the equation
\begin{align} \label{eq_sde_initial_data}
X^{(i, s, \revised{x_0})}_t = \revised{x_0} & + \int_s^t \sigma_{i} (u, X^{(i, s, \revised{x_0})}_u, \P_{X^{(i, s, \revised{x_0})}_u}) \od B_u \\ \nonumber
& + \int_s^t b (u, X^{(i, s, \revised{x_0})}_u, \P_{X^{(i, s, \revised{x_0})}_u}) \od u, \quad t \geq s,
\end{align}
has a unique global strong solution for all $(i, s, \revised{x_0}) \in \gindset \times \Rplus \new{\times} L_p(\Omega, \cF_s, \P)$ such that for all $\new{T \in [s, \infty)}$ one has
\begin{equation*}
\E \sup_{t \in [s, T]} \left\Vert X_t^{(i, s, \revised{x_0})} - z \right\Vert^p < \infty.
\end{equation*}
\end{theo}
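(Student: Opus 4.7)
The plan is to follow the classical Picard iteration scheme from Carmona--Delarue \cite{mean_field_games_1} and indicate the two modifications: first, working in $L_p$ rather than $L_2$, and second, passing to the infinite time horizon. Without loss of generality I take $s=0$ and suppress the superscript $(i,s,x_0)$. For a fixed horizon $T>0$, introduce the Banach space $\mathbb{S}^p_T$ of continuous $(\cF_t)$-adapted processes $Y=(Y_t)_{t\in[0,T]}$ with
\begin{equation*}
\|Y\|_{\mathbb{S}^p_T}^p := \E \sup_{t\in[0,T]} \|Y_t-z\|^p <\infty,
\end{equation*}
modulo indistinguishability. On $\mathbb{S}^p_T$ define the map
\begin{equation*}
\Phi(Y)_t := x_0 + \int_0^t \sigma_i(u,Y_u,\P_{Y_u})\,\dif B_u + \int_0^t b(u,Y_u,\P_{Y_u})\,\dif u.
\end{equation*}
Using the linear growth assumption \ref{standard_assumption_2}, the Burkholder--Davis--Gundy inequality (valid for $p\geq 2$), Jensen's inequality applied to the drift integral, and the elementary bound $W_p(\P_{Y_u},\delta_z)^p \leq \E \|Y_u-z\|^p$, one checks that $\Phi$ maps $\mathbb{S}^p_T$ into itself.

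Next I would show that $\Phi$ is a contraction on $\mathbb{S}^p_T$ for $T$ sufficiently small. The Lipschitz assumption \ref{standard_assumption_1}, combined with the coupling inequality
\begin{equation*}
W_p(\P_{Y_u},\P_{Y'_u})^p \leq \E\|Y_u-Y'_u\|^p \leq \|Y-Y'\|_{\mathbb{S}^p_T}^p,
\end{equation*}
together with BDG and Jensen applied to $\Phi(Y)-\Phi(Y')$, yields
\begin{equation*}
\|\Phi(Y)-\Phi(Y')\|_{\mathbb{S}^p_T}^p \leq C(p)\bigl(L_{\sigma_i}^p T^{p/2} + L_b^p T^p\bigr)\|Y-Y'\|_{\mathbb{S}^p_T}^p,
\end{equation*}
which is a contraction for $T=T_0$ small. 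Banach's fixed point theorem provides a unique fixed point $(X_t)_{t\in[0,T_0]}$, which is the unique $L_p$-solution on $[0,T_0]$.

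To extend to the whole half-line, I would concatenate: on $[T_0,2T_0]$ apply the same argument with initial condition $X_{T_0}\in L_p(\Omega,\cF_{T_0},\P)$, and iterate. The crucial point is that the length $T_0$ of each step depends only on $p$, $L_{\sigma_i}$, $L_b$ and not on the initial condition, so countably many steps cover $\Rplus$. For the global $L_p$-bound, apply BDG and Jensen to the equation \eqref{eq_sde_initial_data} on $[0,T]$, use \ref{standard_assumption_2} and $W_p(\P_{X_u},\delta_z)^p\leq \E\|X_u-z\|^p \leq \E\sup_{r\in[0,u]}\|X_r-z\|^p$, and conclude by Gronwall's lemma that
\begin{equation*}
\E\sup_{t\in[0,T]}\|X_t-z\|^p \leq C_T\bigl(1+\E\|x_0-z\|^p\bigr)<\infty,
\end{equation*}
which also ensures no blow-up between concatenation steps. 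Strong uniqueness on the whole half-line then follows from local uniqueness by a standard stopping-time argument. The main technical obstacle, as usual in the mean-field setting, is the measure dependence, which is handled cleanly by the coupling inequality above; the passage from $p=2$ in \cite{mean_field_games_1} to general $p\geq 2$ only affects the constants in BDG and Jensen and is otherwise cosmetic.
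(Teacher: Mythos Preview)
Your proposal is correct and aligns exactly with what the paper does: the paper does not give its own proof of this theorem but simply cites Theorem~4.21 in \cite{mean_field_games_1} for the case $p=2$ on a finite horizon and remarks that ``the proof can be generalized for any $p\geq 2$, and the obtained solution can be extended to an infinite time horizon in the usual way.'' Your sketch spells out precisely those two modifications (BDG/Jensen with exponent $p$, and concatenation over intervals of fixed length) and is therefore more detailed than the paper itself.
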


First we observe that if sets $\cA_i$ do not have \newest{positive} mass with respect to \newest{the} image measure \newest{of the moment function}, then they can be, in some sense, ignored.

\begin{obs} \label{theo_null_set}
Assume \ref{standard_assumption_first}-\ref{standard_assumption_last}. Let \new{$Y = (Y_t)_{t \geq 0}$} be the unique strong solution to the equation
\begin{equation} \label{eq_nullset_sde_Y}
\new{
Y_t = x_0 + \int_0^t \sigma_{i_0}(s, Y_s, \P_{Y_s}) \od B_s + \int_0^t b(s, Y_s, \P_{Y_s}) \od s
}
\end{equation}
for some fixed $i_0 \in \gindset$. Let $h(t) := \E \left\Vert \newer{Y}_t - z \right\Vert^p$ for $t \geq 0$. Assume that for all $i \in \gindset \setminus \left\lbrace i_0 \right\rbrace$ it holds that $\mu( \cA_i ) = 0$, where $\mu := \lambda \circ h^{-1}$. Then $Y$ solves \eqref{sde_general}.
\end{obs}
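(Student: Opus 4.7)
My plan is to verify directly that $Y$, coming from Theorem \ref{th_strong_existence} applied with the single coefficient $\sigma_{i_0}$, already fits the pieces of \eqref{sde_general} on the time axis because the "wrong" indicators vanish for Lebesgue-almost every $s$. The key observation is that the summed integrand in \eqref{sde_general} differs from $\sigma_{i_0}(s, Y_s, \P_{Y_s})$ only on the set $N := \bigcup_{i \in \gindset \setminus \{i_0\}} h^{-1}(\cA_i)$, and this set has Lebesgue measure zero by the pushforward hypothesis.

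I would proceed as follows. First, I would note that $h : \Rplus \to \Rplus$ is continuous (hence Borel measurable), which follows from the pathwise continuity of $Y$, the a priori bound $\E \sup_{t \in [0,T]} \|Y_t - z\|^p < \infty$ from Theorem \ref{th_strong_existence}, and dominated convergence. Consequently $h^{-1}(\cA_i) \in \cB(\Rplus)$, and
\begin{equation*}
\lambda(h^{-1}(\cA_i)) = (\lambda \circ h^{-1})(\cA_i) = \mu(\cA_i) = 0
\end{equation*}
for each $i \in \gindset \setminus \{i_0\}$. Since $\gindset \subseteq \N$ is countable, the union $N$ is a Lebesgue null set. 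Because the family $(\cA_i)_{i \in \gindset}$ partitions $\Rplus$, for every $s \in \Rplus \setminus N$ we have $h(s) \in \cA_{i_0}$, so
\begin{equation*}
\sum_{i \in \gindset} \ind_{\{h(s) \in \cA_i\}} \sigma_i(s, Y_s, \P_{Y_s}) = \sigma_{i_0}(s, Y_s, \P_{Y_s}).
\end{equation*}

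Next I would argue that the two stochastic integrals coincide. Since the two integrands are equal $\od t$-a.e. (hence $\od t \otimes \od \P$-a.e. as they do not depend on $\omega$ through the indicator), they induce indistinguishable stochastic integrals against $B$; the drift integrals are literally identical. Substituting into \eqref{eq_nullset_sde_Y} gives that $Y$ satisfies \eqref{sde_general} almost surely for every $t \geq 0$ with $g = h$. Finally, I would check that $(\infty, (Y_t)_{t \geq 0})$ fulfills conditions \ref{cond_sol_first}--\ref{cond_sol_last} of Definition \ref{def_solution}: adaptedness and continuity come from Theorem \ref{th_strong_existence}, the supremum bound is stated there, and the integrability conditions (iv) transfer from the $\sigma_{i_0}$ formulation to the summation formulation because the two integrands agree $\od t$-a.e.

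There is no real obstacle here; the only small point of care is that the argument for the stochastic integral needs the integrands to agree on a set of full $\od t \otimes \od \P$-measure (not merely at each fixed $\omega$ on a full-measure set in $t$), which is automatic since the exceptional set $N$ depends only on $t$. The statement is essentially a measure-theoretic bookkeeping lemma that lets us discard the discontinuity set whenever the moment function spends no time in it.
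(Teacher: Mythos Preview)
Your proposal is correct and follows essentially the same approach as the paper: both arguments reduce to the observation that $\ind_{\{h(s)\in\cA_i\}}=0$ for $\lambda$-a.e.\ $s$ when $i\neq i_0$, so the summed integrand collapses to $\sigma_{i_0}(s,Y_s,\P_{Y_s})$ and the stochastic integrals coincide. Your version is in fact more careful than the paper's, as you explicitly justify the continuity of $h$, the null-set argument via countability of $\gindset$, and the verification of Definition~\ref{def_solution}, whereas the paper simply writes out the chain of equalities without further comment.
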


\begin{proof}
For $t \geq 0$ we have
\new{
\begin{align*}
& x_0 + \sum_{i \in \gindset} \int_0^t \ind_{\left\lbrace h(s) \in \cA_i \right\rbrace } \sigma_i(s, Y_s, \P_{Y_s}) \od B_s + \int_0^t b(s, Y_s, \P_{Y_s}) \od s \\
& = x_0 + \int_0^t \ind_{\left\lbrace h(s) \in \cA_{i_0} \right\rbrace } \sigma_{i_0}(s, Y_s, \P_{Y_s}) \od B_s \\
& \qquad \textrm{ } + \sum_{i \neq i_0} \int_0^t \ind_{\left\lbrace h(s) \in \cA_i \right\rbrace } \sigma_i(s, Y_s, \P_{Y_s}) \od B_s + \int_0^t b(s, Y_s, \P_{Y_s}) \od s \\
& = x_0 + \int_0^t \sigma_{i_0}(s, Y_s, \P_{Y_s}) \od B_s + \int_0^t b(s, Y_s, \P_{Y_s}) \od s \\
& = Y_t
\end{align*}
}
almost surely.

\end{proof}

The following example shows that SDEs of type \eqref{sde_general} can have more than one solution:

\begin{exam} \label{exam_null_set_nonunique}
Let $p = 2$ and consider the equation
\begin{equation} \label{eq_nonunique_example}
\begin{cases}
\begin{aligned}
& X_t = 1 + \int_0^t \ind_{ \left\lbrace g(s) \notin \left\lbrace 1 + a \right\rbrace  \right\rbrace } \od B_s, \\
& g(t) = \E \left| X_t \right|^2,
\end{aligned}
\end{cases}
\end{equation}
where $a \geq 0$. Here $\cA_1 \newer{:=} [0, \infty) \setminus  \left\{ 1 + a \right\}$\newer{,} $\cA_2 \newer{:=} \left\{ 1 + a \right\}$\newer{, }$\sigma_1 \equiv 1$ and $\sigma_2 \equiv 0$. By Observation \ref{theo_null_set} the process 
\begin{equation*}
Y_t = 1 + \int_0^t 1 \od B_s = 1 + B_t
\end{equation*}
solves the equation \eqref{eq_nonunique_example}: the function $h$ is now $h(t) = 1 + t$, so $\mu = \lambda \circ h^{-1} = \left. \lambda \right|_{[1, \infty)}$, and therefore $\mu(\cA_2) = 0$. 

However, this solution is not unique. To see this, let $w > 0$ and define a process 
\begin{equation*}
{X}^w_t := 1 + B_{\min \left\lbrace a, t \right\rbrace } + \left( B_{\max \left\lbrace a + w, t \right\rbrace } - B_{a+w} \right).
\end{equation*}
The corresponding moment function is 
\begin{equation*}
{g}_w(t) = 
\begin{cases}
\begin{aligned}
& 1 + t, && \quad t < a, \\
& 1 + a, && \quad t \in [a, a + w) \\
& 1 + (t - w), && \quad t \geq a + w.
\end{aligned}
\end{cases}
\end{equation*}
Now
\begin{align*}
1 + \int_0^t \ind_{ \left\lbrace {g}_w(s) \notin \left\lbrace 1 + a \right\rbrace  \right\rbrace } \od B_s & = 1 + \int_0^t \ind_{ \left\lbrace s \in [0, a) \cup [a + w, \infty)  \right\rbrace } \od B_s \\
& = 1 + \int_0^{t \wedge a}  \od B_s + \int_{a + w}^{ t \vee (a + w)} \od B_s \\
& = {X}_t^{\newer{w}}.
\end{align*}
Therefore the equation \eqref{eq_nonunique_example} has infinitely many solutions.

\end{exam}

Next we show that under some additional assumptions on the sets $\cA_i$, if the drift is strong in the sense that it is constantly driving the particles away from the reference point $z$, then there exists a \newer{strongly unique $L_p$-solution}\newest{.}

\begin{theo} \label{theo_drift_away}
Assume \ref{standard_assumption_first}-\ref{standard_assumption_last}. Suppose that $\gindset = \N$. Let $(y_i)_{i=0}^\infty \subset \Rplus$ such that $y_0 = 0$, $y_{i-1} < y_{i}$ for all $i \in \gindset$ and $y_i \to \infty$ as $i \to \infty$. Assume that 
\begin{enumerate}[label = \rm{(\roman*)}]
    \item for each $i \in \gindset$ it holds that $(y_{i-1}, y_i) \subseteq \cA_i \subseteq [y_{i-1}, y_i]$, \label{th_drift_first}
    \item $\P (x_0 \neq z) > 0$, and
    \item $\left\langle x - z, b(t, x, \mu) \right\rangle \geq 0$ for all $(t, x, \mu) \in [0, \infty) \times \R^d \times \cP_p(\R^d)$ and the inequality is strict when $x \neq z$. \label{as_strong_drift}  \label{th_drift_last}
\end{enumerate}
\new{
Then the equation \eqref{sde_general} has a \newest{strongly unique} $L_p$-solution $(\tmax, X_{t \in [0, \tmax)})$, where $\tmax \in (0, \infty]$ is the maximal lifetime. Moreover, if 
\begin{equation} \label{suff_cond_tmax}
\sum_{k=2}^\infty \frac{1}{K_b + K_{\sigma_k}^2} \log \left( \frac{y_k}{y_{k-1}} \right) = \infty,
\end{equation}
then $\tmax = \infty$.
}

\end{theo}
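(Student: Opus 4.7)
My plan is to build a solution by concatenating classical strong solutions from Theorem \ref{th_strong_existence} along the successive stopping times at which the moment function $g(t) = \E\|X_t-z\|^p$ crosses the thresholds $y_i$. The crucial input is that assumption \ref{as_strong_drift} forces $g$ to be strictly increasing, so the indicator $\ind_{\{g(s) \in \cA_i\}}$ switches in a predictable order and each coefficient $\sigma_i$ is used on at most one interval.

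Construction: since $g(0) = \E\|x_0 - z\|^p > 0$ and $(\cA_i)_{i \in \gindset}$ partition $\Rplus$, there is a unique $i_1 \in \gindset$ with $g(0) \in \cA_{i_1}$. Theorem \ref{th_strong_existence} provides a strong solution $X^{(1)}$ to the SDE with coefficient $\sigma_{i_1}$ and initial condition $x_0$. Applying It\^o's formula to $x \mapsto \|x-z\|^p$ and taking expectations, the drift contribution is nonnegative by \ref{as_strong_drift} and the second-order contribution $\frac{p}{2}\|x-z\|^{p-2}\|\sigma_{i_1}\|^2 + \frac{p(p-2)}{2}\|x-z\|^{p-4}\|\sigma_{i_1}^T(x-z)\|^2$ is nonnegative as well, so $g_1(t) := \E\|X^{(1)}_t - z\|^p$ is nondecreasing; in particular $g_1(t) \geq g_1(0) > 0$, forcing $\P(X^{(1)}_t \neq z) > 0$. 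The strict part of \ref{as_strong_drift} then makes the drift integrand strictly positive on a set of positive probability, so $g_1$ is strictly increasing (and clearly continuous). Now set $\tau_1 := \inf\{t > 0 : g_1(t) \notin \cA_{i_1}\}$. Since $\cA_{i_1} \subseteq [y_{i_1-1}, y_{i_1}]$ and $g_1$ is strictly increasing, either $\tau_1 = \infty$ or $g_1(\tau_1) = y_{i_1}$ and $g_1(t) > y_{i_1}$ for $t > \tau_1$, in which case the correct index becomes $i_2 := i_1 + 1$. Iterating with initial data $(i_k, \tau_{k-1}, X^{(k-1)}_{\tau_{k-1}})$ yields processes $X^{(k)}$ and an increasing sequence $(\tau_k)$, and I set $X_t := X^{(k)}_t$ on $[\tau_{k-1}, \tau_k)$ and $\tmax := \lim_k \tau_k$. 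Only finitely many pieces are concatenated before any $S < \tmax$, so the conditions \ref{cond_sol_first}--\ref{cond_sol_last} are inherited from Theorem \ref{th_strong_existence}.

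For uniqueness and maximality, any $L_p$-solution $(S, Y)$ has a strictly increasing continuous moment function $h$ by the same It\^o argument, and $h(0) = g(0)$; on $[0, \tau_1)$ the indicator in \eqref{sde_general} reduces $Y$ to a solution of the SDE with coefficient $\sigma_{i_1}$, hence $Y = X^{(1)}$ by Theorem \ref{th_strong_existence}, and an induction gives $Y = X$ on $[0, \min(S, \tmax))$. If $S > \tmax$ then $\E\|Y_t - z\|^p = g(t) \to \infty$ as $t \uparrow \tmax$, contradicting $\E \sup_{[0, S']}\|Y_t - z\|^p < \infty$ for any $S' \in (\tmax, S)$. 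For the final assertion, the linear growth \ref{standard_assumption_2} together with $W_p(\P_{X_t}, \delta_z) = g(t)^{1/p}$ and Jensen's inequality bound every term in the It\^o expansion of $g$ by a multiple of $g(t)$ whenever $g(t) \geq 1$, giving
\begin{equation*}
g'(t) \leq C(p)\bigl(K_b + K^2_{\sigma_{i_1 + k - 1}}\bigr)\, g(t) \sptext{0.5}{on}{0.5} [\tau_{k-1}, \tau_k]
\end{equation*}
for $k$ large enough that $y_{i_1 + k - 2} \geq 1$. Gronwall together with $g(\tau_{k-1}) = y_{i_1 + k - 2}$ and $g(\tau_k) = y_{i_1 + k - 1}$ yields
\begin{equation*}
\tau_k - \tau_{k-1} \geq \frac{1}{C(p)} \frac{\log(y_{i_1 + k - 1}/y_{i_1 + k - 2})}{K_b + K^2_{\sigma_{i_1 + k - 1}}},
\end{equation*}
and summing over $k$ bounds $\tmax$ from below by a positive multiple of the tail of the series in \eqref{suff_cond_tmax}, so $\tmax = \infty$.

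The main obstacle I anticipate is pinning down the strict monotonicity of $g$ cleanly, since the stopping times $\tau_k$ are defined in terms of $g$, which itself depends on the process whose definition involves $\tau_k$; handling this correctly is what makes the iterative construction go through. After that, the proof is essentially bookkeeping plus one standard It\^o/Gronwall estimate.
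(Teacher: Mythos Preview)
Your proposal is correct and follows essentially the same route as the paper: strict monotonicity of the moment function via It\^o's formula and assumption~\ref{as_strong_drift}, piecewise construction by concatenating the solutions from Theorem~\ref{th_strong_existence} at the threshold times, uniqueness by induction over those times, and the Gronwall bound on each piece to obtain \eqref{suff_cond_tmax}. The only point to tighten is the uniqueness step: since $\tau_1$ is defined through $g$, you cannot assert directly that $h(s)\in\cA_{i_1}$ on $[0,\tau_1)$; as the paper does, first work on $[0,\min(\tau_1,\sigma_1))$ where $\sigma_1$ is the analogous exit time for $Y$, conclude $Y=X^{(1)}$ there, and deduce $\tau_1=\sigma_1$.
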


\begin{proof}

\underline{\textbf{Step A:}} Fix $(i, s) \in \gindset \times \Rplus$ and $\xi \in L_p(\Omega, \cF_s, \P)$ such that $\P(\xi \neq z) > 0$. Let $Y = X^{(i, s, \xi)}$, where the process $\left( X^{(i, s, \xi)}_t \right)_{t \geq s}$ is obtained in Theorem \ref{th_strong_existence}. In other words, $Y = (Y_t)_{t \geq s}$ is a unique strong solution to the equation
\begin{equation*}
Y_t = \xi + \int_s^t \sigma_i(u, Y_u, \P_{Y_u}) \od B_u + \int_s^t b(u, Y_u, \P_{Y_u}) \od u, \quad t \geq s,
\end{equation*}
such that \newer{
\begin{equation} \label{finite_moment_Y}
\E \sup_{t \in [s, T]} \left\Vert Y_t - z \right\Vert^p < \infty
\end{equation} 
}for all $T \in (s, \infty)$. Then, by applying It\^{o}'s formula to the function $x \mapsto \left\Vert x - z \right\Vert^p$ and taking the expectation, we have
\begin{align} \label{ito1_drift_away}
\E \left\Vert Y_t - z \right\Vert^p & = \E \left\Vert \xi - z \right\Vert^p \\ \nonumber
& + p  \int_s^t \E \left[  \left\Vert Y_u - z \right\Vert^{p-2} \left\langle Y_u - z, b(u, Y_u, \P_{Y_u }) \right\rangle \right] \od u \\ \nonumber
& + \frac{p}{2} \int_s^t \E \left[  \left\Vert Y_u - z \right\Vert^{p-2} \left\Vert \sigma_{i} (u, Y_u, \P_{Y_u }) \right\Vert^2 \right]  \od u \\ \nonumber
& + \frac{p(p-2)}{2} \int_s^t \E \left[ \left\Vert Y_u - z \right\Vert^{p-4} R_{(u, i)} \right]  \od u,
\end{align}
where 
\begin{equation} \label{remainder_process}
R_{(u, i)} := \left\langle \sigma_{i} \sigma_{i}^{\top} (u, Y_u, \P_{Y_u}) (Y_u  - z), Y_u  - z \right\rangle \geq 0.
\end{equation}
\newer{One notices that the stochastic integral term does not appear in \eqref{ito1_drift_away} since its expectation is zero, see Remark \ref{rem_martingale}.}

Here we already see that the function $[s, \infty) \ni t \mapsto \E \left\Vert Y_t - z \right\Vert^p$ is non-decreasing. Then $\E \left\Vert Y_t - z \right\Vert^p \geq \E \left\Vert \xi - z \right\Vert^p > 0$, so we have $\P \left( Y_t \neq z \right) > 0$ for all $t \geq s$. Therefore
\begin{equation*}
\left\langle Y_t - z, b(t, Y_t, \P_{Y_t }) \right\rangle > 0
\end{equation*}
with positive probability for all $t \geq s$, and in particular
\begin{equation*}
\E \left[  \left\Vert Y_t - z \right\Vert^{p-2} \left\langle Y_t - z, b(t, Y_t, \P_{Y_t }) \right\rangle \right] > 0
\end{equation*}
for $t \geq s$. Hence the function $[s, \infty) \ni t \mapsto \E \left\Vert Y_t - z \right\Vert^p$ is strictly increasing.

\underline{\textbf{Step B:}} Without loss of generality, we can assume that $\E \left\Vert x_0 - z \right\Vert^p \in (0, y_1]$. Let $T_0 := 0$. Let $X^1 \newer{:}= \left(X^{(1, T_0, x_0)}\right)_{t \geq 0}$ and define 
\begin{equation*}
T_1 := \inf \left\{ t \geq 0 \textrm{ } \bigg| \hspace{0.30em} \E \left\Vert X_t^1 - z \right\Vert^p = y_1 \right\}.
\end{equation*}
If $T_1 = \infty$, then we let $T := \infty$ and stop. Otherwise, let $X^2 = \left(X^{(2, T_1, X^1_{T_1})}_t\right)_{t \geq T_1}$ and define 
\begin{equation*}
T_2 := \inf \left\{ t \geq T_1 \textrm{ } \bigg| \hspace{0.30em}  \E \left\Vert X_t^2 - z \right\Vert^p = y_2 \right\}.
\end{equation*}
Again, if $T_2 = \infty$, then we stop and let $T := \infty$, otherwise we continue.

For arbitrary $i \in \gindset$ \newest{with} $i > 1$ \newest{and $T_{i-1} < \infty$}, let $X^i = \left(X^{(i, T_{i-1}, X^{i-1}_{T_{i-1}})}_t\right)_{t \geq T_{i-1}}$ and define 
\begin{equation*}
T_i := \inf \left\{ t \geq T_{i-1} \textrm{ } \bigg| \hspace{0.30em}  \E \left\Vert X_t^i - z \right\Vert^p = y_i \right\}.
\end{equation*}
In the case $T_i = \infty$ we stop and let $T := \infty$. Otherwise we continue and let 
\begin{equation*}
T := \lim_{i \to \infty} T_i \in (0, \infty].
\end{equation*}
On each interval $[T_{i-1}, T_i]$ the process $(X^i_t)_{t \in [T_{i-1}, T_i]}$ is a strong solution to \eqref{sde_general} because the coefficient $\sigma$ used in $X^i$ is obtained from $\sigma_i$. \new{For all $i \in \gindset$ the process $X^{i+1}$ starts from the \newer{"}ending point\newer{"} of the process $X^{i}$, that is, $X_{T_i}^{i+1} = X_{T_i}^{i}$}, so we can construct a process $X = (X_t)_{t \in [0, T)}$ such that $X_t = X^i_t$ for all $t \in [T_{i-1}, T_i]$, $i \in \gindset$. Therefore $X$ is \newest{an $L_p$-}solution to \eqref{sde_general} on the interval $[0, T)$.

Let $S \in (0,T)$. Then there \new{are} only finitely many $T_i$ such that $T_i \leq S$. Since for each $i \in \gindset$ one has 
\begin{equation*}
\E \sup_{t \in [T_{i-1}, T_i]} \left\Vert X_t^i - z \right\Vert^p < \infty,
\end{equation*}
we obtain that 
\begin{equation*}
\E \sup_{t \in [0, S]} \left\Vert X_t - z \right\Vert^p < \infty.
\end{equation*}
\new{Moreover, if $T < \infty$, then \newer{we must have}
\begin{equation} \label{infty_moment}
\lim_{S \uparrow T} \E \sup_{t \in [0, S]} \left\Vert X_t - z \right\Vert^p = \infty,
\end{equation}
}
\newer{which follows from the assumption that $y_k \uparrow \infty$ as $k \to \infty$.}

\underline{\textbf{Step C:}} \newer{To prove the uniqueness of \newest{the} solution, let us consider two $L_p$-solutions to \eqref{sde_general}, denoted by $(T, (X_t)_{t \in [0, T)})$ and $(S, (Y_t)_{t \in [0, S)})$. Let $T_1, T_2, ...$ and $S_1, S_2, ...$ be the time points obtained in Step B for $X = (X_t)_{t \in [0, T)}$ and $Y = (Y_t)_{t \in [0, S)}$\newest{,} respectively.

Let $R_1 := \min \left\{ T_1, S_1 \right\}$. Since both processes $X$ and $Y$ have the same initial value, they solve 
\begin{equation} \label{stepC_sde1}
Z_t = x_0 + \int_0^t \sigma_1(s, Z_s, \P_{Z_s}) \od B_s + \int_0^t b(s, Z_s, \P_{Z_s}) \od s 
\end{equation}
for $t \in [0, R_1)$. By Theorem \ref{th_strong_existence} the equation \eqref{stepC_sde1} has a unique strong solution, hence $\P(X_t = Y_t) = 1$ for $t \in [0, R_1)$. In particular 
\begin{equation*}
\E \left\Vert X_t - z \right\Vert^p = \E \left\Vert Y_t - z \right\Vert^p
\end{equation*}
for $t \in [0, R_1)$, so $T_1 = S_1$.

\Newest{
Assume that $R_1 < \infty$ and let $R_2 := \min \left\{ T_2, S_2 \right\}$. By the continuity of the functions $t \mapsto \E \left\Vert X_t - z \right\Vert^p$ and $t \mapsto \E \left\Vert Y_t - z \right\Vert^p$ we get $R_2 > R_1$. Moreover, by Step A these functions are strictly increasing on $(R_1, R_2)$ so that $\E \left\Vert Y_t - z \right\Vert^p \in \cA_2$ and $\E \left\Vert X_t - z \right\Vert^p \in \cA_2$ for all $t \in (R_1, R_2)$. This implies that both $X$ and $Y$ solve the same SDE 
\begin{equation*}
Z_t = X_{R_1} + \int_{R_1}^t \sigma_2(s, Z_s, \P_{Z_s}) \od B_s + \int_{R_1}^t b(s, Z_s, \P_{Z_s}) \od s 
\end{equation*}
on $(R_1, R_2)$. By Theorem \ref{th_strong_existence} we have $\P(X_t = Y_t) = 1$ for $t \in [R_1, R_2)$ and therefore $T_2 = S_2$.}

\Newest{Assuming that $R_2 < \infty$,} we repeat the same argument inductively to get that \Newest{either} $T_i = S_i$ for all $i \in \N$ with $T_i < \infty$, \Newest{or $S_i = T_i = \infty$ for some $i \in \N$}, eventually obtaining that $S = T$ and $\P(X_t = Y_t) = 1$ for all $t \in [0, T)$.
}

\new{Since we have now obtained the uniqueness of the solution, it is justified to consider \newer{the} maximality of \newest{the} lifetime. If $T = \infty$, it is clear that $T = \tmax$. In the case $T < \infty$ we use \eqref{infty_moment} to conclude that $T$ is the maximal lifetime.}

\underline{\textbf{Step D:}} \newer{\newest{It remains} to show that if the condition \eqref{suff_cond_tmax} holds, then $\tmax = \infty$.} \new {Let 
\begin{equation*}
n_0 := \min \left\{ n \in \N \mid y_n \geq \newer{2} \right\}
\end{equation*}
and fix any $N > n_0$. We can assume that $T_N < \infty$, \newer{because otherwise $\tmax = T_N = \infty$ and we are done}. Let $X^N$ be the process defined in Step B. Since 
\begin{equation*}
\E \sup_{t \in [T_{N-1}, T_N]} \left\Vert X_t^N - z \right\Vert^p < \infty
\end{equation*} 
we can apply It\^{o}'s formula to the function $x \mapsto \left\Vert x \right\Vert^p$ and take the expectation to obtain that
\begin{align} \label{eq_stepD_main}
\E \left\Vert X_{t}^N - z \right\Vert^p & = y_{N-1} \\ \nonumber
& + p  \int_{T_{N-1}}^t \E \left[  \left\Vert X_u^N - z \right\Vert^{p-2} \left\langle X_u^N - z, b(u, X_u^N, \P_{X_u^N }) \right\rangle \right] \od u \\ \nonumber
& + \frac{p}{2} \int_{T_{N-1}}^t \E \left[  \left\Vert X_u^N - z \right\Vert^{p-2} \left\Vert \sigma_N (u, X_u^N, \P_{X_u^N }) \right\Vert^2 \right]  \od u \\ \nonumber
& + \frac{p(p-2)}{2} \int_{T_{N-1}}^t \E \left[ \left\Vert X_u^N - z \right\Vert^{p-4} R_{(u, N)} \right]  \od u, \nonumber
\end{align}
for $t \in [T_{N-1}, T_N)$, where \newer{again}
\begin{equation*}
R_{(t, N)} := \left\langle \sigma_N \sigma_N^{\top} (t, X_t^N, \P_{X_t^N}) (X_t^N  - z), X_t^N  - z \right\rangle.
\end{equation*}
\newest{Similarly as in Step A, the stochastic integral term does not appear in \eqref{eq_stepD_main}, see Remark \ref{rem_martingale}.}

\newer{We recall that if $f, g \in L^p(\Omega, \cF, \P)$, then 
\begin{equation} \label{wass_estimate}
W_p(\P_f, \P_g)^p \leq \E \left\Vert f - g \right\Vert^p,
\end{equation}
which can be seen from our definition for $p$-Wasserstein distance in \eqref{wasserstein_alt_def}. Using this observation,} the Cauchy-Schwarz inequality, the linear growth condition \ref{standard_assumption_2} and H{\"o}lder's inequality we obtain that 
\begin{align*}
& \E \left\Vert X_t^N - z \right\Vert^{p-2} \left\langle X_t^N - z, b(t, X_t^N, \P_{X_t^N}) \right\rangle \\
& \leq \E \left[ \left\Vert X_t^N - z \right\Vert^{p-1} \left\Vert b(t, X_t^N, \P_{X_t^N}) \right\Vert \right] \\
& \leq K_b \E \left[ \left\Vert X_t^N - z \right\Vert^{p-1} \left( 1 + \left\Vert X_t^N - z \right\Vert + W_p(\P_{X_t^N}, \delta_z) \right) \right] \\
& \leq K_b \left[ \E \left\Vert X_t^N - z \right\Vert^{p-1} + \E \left\Vert X_t^N - z \right\Vert^{p} + \E \left\Vert X_t^N - z \right\Vert^{p-1} \left( \E \left\Vert X_t^N - z \right\Vert^{p} \right)^{\frac{1}{p}} \right] \\
& \leq K_b \left[ \left( \E \left\Vert X_t^N - z \right\Vert^{p} \right)^{\frac{p-1}{p}} + 2 \E \left\Vert X_t^N - z \right\Vert^{p}  \right] \\
& \leq 3 K_b \E \left\Vert X_t^N - z \right\Vert^{p}.
\end{align*}
In the last inequality we used the fact that $N > n_0$, which implies that for any $t \in [T_{N-1}, T_N)$ one has $\E \left\Vert X_t^N - z \right\Vert^{p} \geq y_{N-1} \geq 1$ , hence 
\begin{equation*}
\left( \E \left\Vert X_t^N - z \right\Vert^{p} \right)^{\alpha} \leq \E \left\Vert X_t^N - z \right\Vert^{p} 
\end{equation*}
for any exponent $\alpha \in (0, 1)$.

Similarly, we use \newer{the inequality \eqref{wass_estimate}}, the assumption \ref{standard_assumption_2} and H{\"o}lder's inequality to see that 
\begin{align*}
& \E \left[ \left\Vert X_t^N - z \right\Vert^{p-2} \left\Vert \sigma_N(t, X_t^N, \P_{X_t^N}) \right\Vert^{2} \right] \\
& \leq  K_{\sigma_N}^2 \E \left[ \left\Vert X_t^N - z \right\Vert^{p-2} \left( 1 + \left\Vert X_t^N - z \right\Vert + W_p(\P_{X_t^N}, \delta_z) \right)^{2} \right] \\
& \leq 3 K_{\sigma_N}^2 \left[\E \left\Vert X_t^N - z \right\Vert^{p-2} + 2 \E \left\Vert X_t^N - z \right\Vert^{p}  \right] \\
& \revised{ \leq 3 K_{\sigma_N}^2 \left[ \left(\E  \left\Vert X_t^N - z \right\Vert^{p} \right)^{\frac{p-2}{p}} + 2 \E \left\Vert X_t^N - z \right\Vert^{p}  \right] } \\
& \leq 9 K_{\sigma_N}^2 \E \left\Vert X_t^N - z \right\Vert^{p}.
\end{align*}

By the Cauchy-Schwarz inequality and the assumption \ref{standard_assumption_2} one gets 
\begin{align*}
R_{(t, N)} & \leq \left\Vert \sigma_N(t, X_t^N, \P_{X_t^N}) \right\Vert \left\Vert \sigma_N^{\top}(t, X_t^N, \P_{X_t^N}) \right\Vert \left\Vert X_t^N - z \right\Vert^2 \\
& \leq K_{\sigma_N}^2 \left( 1 + \left\Vert X_t^N - z \right\Vert + W_p(\P_{X_t^N}, \delta_z) \right)^2 \left\Vert X_t^N - z \right\Vert^2.
\end{align*}
Using the estimate above, H\"{o}lder's inequality \newer{and the inequality \eqref{wass_estimate}} gives us
\begin{align*}
& \E \left\Vert X_t^N - z \right\Vert^{p-4} R_{(t, N)} \\
& \leq K_{\sigma_N}^2 \E \left[ \left\Vert X_t^N - z \right\Vert^{p-2} \left( 1 + \left\Vert X_t^N - z \right\Vert + W_p(\P_{X_t^N}, \delta_z) \right)^2  \right] \\
& \leq 9 K_{\sigma_N}^2 \E \left\Vert X_t^N - z \right\Vert^{p}.
\end{align*}

Combining all the estimates above yields
\begin{equation*}
\E \left\Vert X_{t}^N - z \right\Vert^p \leq y_{N-1}  + C\newer{(p)} \left(K_b + K_{\sigma_N}^2 \right) \int_{T_{N-1}}^t \E \left\Vert X_s^N - z \right\Vert^{p} \od s,
\end{equation*}
where $C\newer{(p)} > 0$ is a constant \newer{depending only on $p$}. Thus by Gronwall's inequality
\begin{equation*}
\E \left\Vert X_{t}^N - z \right\Vert^p \leq y_{N-1} \exp( C\newer{(p)} \left(K_b + K_{\sigma_N}^2\right) (t - T_{N-1})).
\end{equation*}
We solve the equation
\begin{equation*}
y_{N-1} \exp( C\newer{(p)} \left(K_b + K_{\sigma_N}^2\right) (\hat{T} - T_{N-1})) = y_N
\end{equation*}
with respect to $\hat{T}$ to obtain that 
\begin{equation*}
\hat{T} = \frac{1}{C\newer{(p)}} \frac{1}{K_b + K_{\sigma_N}^2} \log \left( \frac{y_N}{y_{N-1}} \right) + T_{N-1}.
\end{equation*}
Since $T_\newer{N} \geq \hat{T}$ we get
\begin{align*}
T_N & \geq \frac{1}{C\newer{(p)}} \frac{1}{K_b + K_{\sigma_N}^2} \log \left( \frac{y_N}{y_{N-1}} \right) + T_{N-1} \\
& \geq \frac{1}{C\newer{(p)}} \sum_{n = n_0 + 1}^N \revised{\frac{1}{K_b + K_{\sigma_n}^2}} \log \left( \frac{y_n}{y_{n-1}} \right).
\end{align*}
Now either there \newest{exists an $N \in \N$} such that $T_{N} = \infty$, or \newest{otherwise}
\begin{equation*}
\tmax = \lim_{N \to \infty} T_N \geq \frac{1}{C\newer{(p)}} \sum_{n = n_0 + 1}^\infty \revised{\frac{1}{K_b + K_{\sigma_n}^2}} \log \left( \frac{y_n}{y_{n-1}} \right) = \infty
\end{equation*}
by assumption \eqref{suff_cond_tmax}.

}

\end{proof}

\begin{remark} \label{rem_martingale}
\newest{
Let
\begin{align*}
M = \left( M_t \right)_{t \geq s} := & \left( \int_{s}^{t} \left\Vert Y_u - z \right\Vert^{p-2} \left\langle \revised{\sigma^{\top}_i(u, Y_u, \P_{Y_u})} (Y_u - z), \od B_u \right\rangle   \right)_{t \geq s} \\
= & \left( \sum_{j=1}^d \int_s^{t}  \left\Vert Y_u - z \right\Vert^{p-2} \left[ \revised{\sigma_i^{\top} (u, Y_u, \P_{Y_u})} (Y_u - z) \right]_j \od B_u^j \right)_{t \geq s},
\end{align*}
where $\left[ \sigma_i(u, Y_u, \P_{Y_u}) (Y_u - z) \right]_j$ denotes the $j$:th component of the corresponding $d$-dimensional vector. The process $M$ is a martingale and, in particular, $\E M_t = 0$ for all $t \geq s$. 

To see this, we first observe that since the process $(Y_t)_{t \geq s}$ is adapted and has continuous sample paths, the processes inside the stochastic integrals are progressively measurable. Moreover,
\begin{equation*}
\P \left( \int_s^T \left\Vert Y_u - z \right\Vert^{2(p-2)} \left[ \sigma_i(u, Y_u, \P_{Y_u}) (Y_u - z) \right]_j^2 \od u < \infty \right) = 1
\end{equation*}
for all $j \in \left\{1, ..., d\right\}$ and any $T \in (s, \infty)$, so the stochastic integrals are well-defined continuous local martingales. By the Burkholder-Davis-Gundy inequality it suffices to show that $\E \sqrt{\left\langle M \right\rangle_t} < \infty$ for all $t \geq s$.

We notice that
\begin{align*}
\left\langle M \right\rangle_t \leq \int_s^t \left\Vert Y_u - z \right\Vert^{2(p-1)} \left\Vert \sigma_i(u, Y_u, \P_{Y_u}) \right\Vert^2 \od u
\end{align*}
for all $t \geq s$, so we obtain the estimate
\begin{align*}
\E \sqrt{ \left\langle M \right\rangle_t } & \leq \E \sqrt{ \int_s^t \sup_{r \in [s, t]} \left[ \left\Vert Y_r - z \right\Vert^{2(p-1)} \left\Vert \sigma_i(r, Y_r, \P_{Y_r}) \right\Vert^2 \right] \od u } \\
& \leq (\sqrt{t - s}) \E \sup_{r \in [s, t]} \left[ \left\Vert Y_r - z \right\Vert^{p-1} \left\Vert \sigma_i(r, Y_r, \P_{Y_r}) \right\Vert \right]
\end{align*}
for $t \geq s$.

Using the linear growth assumption \ref{standard_assumption_2} for $\sigma_i$ we get 
\begin{align*}
& \E \sup_{r \in [s, t]} \left[ \left\Vert Y_r - z \right\Vert^{p-1} \left\Vert \sigma_i(r, Y_r, \P_{Y_r}) \right\Vert \right] \\
& \leq \revised{K_{\sigma_i}} \E \sup_{r \in [s, t]} \left[ \left\Vert Y_r - z \right\Vert^{p-1} \left[ 1 + \left\Vert  Y_r - z \right\Vert + W_p(\P_{Y_r}, \delta_z)  \right] \right] \\
& \leq \revised{K_{\sigma_i}}  \left[ \E \sup_{r \in [s, t]} \left\Vert Y_r - z \right\Vert^{p-1} + \E \sup_{r \in [s, t]} \left\Vert Y_r - z \right\Vert^{p} + \E \sup_{r \in [s, t]} \left[ \left\Vert Y_r - z \right\Vert^{p-1} \left( \E \left\Vert Y_r - z \right\Vert^p \right)^{\frac{1}{p}} \right] \right],
\end{align*}
where we used the fact that $\revised{W_p(\P_f, \delta_z)} = \left( \E \left\Vert f - z \right\Vert^p \right)^{\frac{1}{p}}$ for any random variable $f \in L_p(\Omega, \cF, \P)$. We recall that 
\begin{equation*}
\E \sup_{r \in [s, t]} \left\Vert Y_r - z \right\Vert^{p} < \infty
\end{equation*}
by Theorem \ref{th_strong_existence}. By H{\"o}lder's inequality we have 
\begin{equation*}
\E \sup_{r \in [s, t]} \left\Vert Y_r - z \right\Vert^{p-1} \leq \left( \E \sup_{r \in [s, t]} \left\Vert Y_r - z \right\Vert^{p} \right)^{\frac{p-1}{p}} < \infty.
\end{equation*}
Similarly we obtain 
\begin{align*}
& \E \sup_{r \in [s, t]} \left[ \left\Vert Y_r - z \right\Vert^{p-1} \left( \E \left\Vert Y_r - z \right\Vert^p \right)^{\frac{1}{p}} \right] \\
& \leq  \left( \E \sup_{r \in [s, t]}  \left\Vert Y_r - z \right\Vert^{p} \right)^{\frac{p-1}{p}} \left( \E \sup_{r \in [s, t]} \left\Vert Y_r - z \right\Vert^p \right)^{\frac{1}{p}} \\
& < \infty.
\end{align*}

We conclude that $E \sqrt{ \left\langle M \right\rangle_t } < \infty$ for all $t \geq s$, hence the process $M$ is a martingale. 
}

\end{remark}

\new{

As a consequence of the Theorem \ref{theo_drift_away} \newest{we will show the following:} If the linear growth constants $\left( K_{\sigma_i} \right)_{i \in \N}$ are uniformly bounded, then we always have $\tmax = \infty$, and in particular this does not depend on the choice of $(y_k)_{k \in \N}$ \newer{as long as $y_k \uparrow \infty$.}

\begin{coro}
Suppose that the assumptions \ref{th_drift_first}-\ref{th_drift_last} in Theorem \ref{theo_drift_away} hold. If there is a constant $K > 0$ such that
\begin{equation*}
\sup_{i \in \N} K_{\sigma_i} \leq K < \infty,
\end{equation*}
then there exists a \newest{strongly unique} $L_p$-solution with a maximal lifetime $\tmax = \infty$.
\end{coro}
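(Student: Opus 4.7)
The plan is to reduce everything to the sufficient condition \eqref{suff_cond_tmax} supplied by Theorem \ref{theo_drift_away}. Since the hypotheses \ref{th_drift_first}--\ref{th_drift_last} are assumed, that theorem already produces a strongly unique $L_p$-solution with maximal lifetime $\tmax \in (0,\infty]$, so the only thing left to check is that $\tmax$ is in fact $+\infty$.

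First, I would use the uniform bound $K_{\sigma_i} \leq K$ to replace the $k$-dependent denominator in \eqref{suff_cond_tmax} by a constant. Indeed,
\begin{equation*}
\frac{1}{K_b + K_{\sigma_k}^2} \;\geq\; \frac{1}{K_b + K^2}
\qquad \text{for every } k \in \N,
\end{equation*}
so
\begin{equation*}
\sum_{k=2}^\infty \frac{1}{K_b + K_{\sigma_k}^2} \log\!\left( \frac{y_k}{y_{k-1}} \right)
\;\geq\; \frac{1}{K_b + K^2} \sum_{k=2}^\infty \log\!\left( \frac{y_k}{y_{k-1}} \right).
\end{equation*}

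Second, I would evaluate the telescoping sum. For any $N \geq 2$,
\begin{equation*}
\sum_{k=2}^N \log\!\left( \frac{y_k}{y_{k-1}} \right) \;=\; \log\!\left( \frac{y_N}{y_1} \right),
\end{equation*}
and since $y_N \to \infty$ as $N \to \infty$ by hypothesis, the right-hand side diverges to $+\infty$. Combining with the previous display yields
\begin{equation*}
\sum_{k=2}^\infty \frac{1}{K_b + K_{\sigma_k}^2} \log\!\left( \frac{y_k}{y_{k-1}} \right) \;=\; \infty,
\end{equation*}
so \eqref{suff_cond_tmax} is satisfied. Applying the last conclusion of Theorem \ref{theo_drift_away} gives $\tmax = \infty$.

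There is no real obstacle here; the argument is a one-line estimate followed by a telescoping identity. The only point worth stressing is that the result does not depend on the particular choice of the partition points $(y_k)_{k \in \N}$ beyond the standing hypothesis $y_k \uparrow \infty$, which is exactly what makes the statement striking.
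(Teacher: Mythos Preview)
Your proof is correct and follows essentially the same approach as the paper: bound the denominator uniformly using $K_{\sigma_k}\le K$, then observe that the remaining series $\sum_{k=2}^\infty \log(y_k/y_{k-1})$ telescopes to $\log(y_N)-\log(y_1)\to\infty$, so condition \eqref{suff_cond_tmax} holds and Theorem \ref{theo_drift_away} gives $\tmax=\infty$.
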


\begin{proof}
We only need to show that $\tmax = \infty$. \revised{Since 
\begin{align*}
\sum_{k=2}^N \log \left( \frac{y_k}{y_{k-1}} \right) & = \sum_{k=2}^N \left[ \log(y_k) - \log(y_{k-1}) \right] \\
& = \log(y_N) - \log(y_1) \to \infty
\end{align*}
as $N \to \infty$, it holds that
\begin{equation*}
\sum_{k = 2}^\infty \frac{1}{K_b + K_{\sigma_k}^2} \log \left( \frac{y_k}{y_{k-1}} \right) \geq \frac{1}{K_b + K^2} \sum_{k = 2}^\infty \log \left( \frac{y_k}{y_{k-1}} \right) = \infty.
\end{equation*}
We conclude that $\tmax = \infty$ by Theorem \ref{theo_drift_away} }
\end{proof}

In the following example we consider a simple equation with unbounded linear growth constants, that is, $\sup_{i \in \N} K_{\sigma_i} = \infty$. By varying parameters in this equation we can find examples of cases when $\tmax < \infty$ \newest{and} $\tmax = \infty$ (even if \eqref{suff_cond_tmax} is not satisfied).

\begin{exam} \label{ex_converging_T}
Consider the equation
\begin{equation} \label{eq_example_bounded_lips}
\begin{cases}
\begin{aligned}
X_t & = x_0 + \sum_{n = 1}^\infty \int_0^t \ind_{ \left\{ g(s) \in [y_{n-1}, y_n) \right\} } n^{\alpha} \od B_s + \int_0^t X_s \od s, \\
g(t) & = \E \left| X_t \right|^2
\end{aligned}
\end{cases}
\end{equation}
for some exponent $\alpha > 0$. \newest{In this example $z = 0$ and $d = 1$}. We have $K_{\sigma_n} = n^{\alpha}$ for all $n \in \N$ and $K_b = 1$. For simplicity we suppose that $\E \left| x_0 \right|^2 \in (0, y_1)$. By Theorem \ref{theo_drift_away} the equation \eqref{eq_example_bounded_lips} has a \newer{strongly unique $L_p$-solution $(\tmax, (X_t)_{t \in [0, \tmax)})$, where $\tmax \in (0, \infty]$}.

Let us fix $n \geq 2$. Let $X^n$ and $T_n$ be like in the proof of Theorem \ref{theo_drift_away}. Applying It{\^o}'s formula one notices that the function $t \mapsto \E \left| X_t^n \right|^2$ solves the integral equation
\begin{equation} \label{ex_explosion_ode}
\E \left| X_t^n \right|^2 = y_{n-1} + 2 \int_{\new{T_{n-1}}}^t \E \left| X_s^n \right|^2 \od s + n^{2 \alpha}(t - T_{n-1}), \quad t \in [T_{n-1}, T_n).
\end{equation}
\newest{The} solution to \eqref{ex_explosion_ode} is given by 
\begin{equation*}
\E \left| X_t^n \right|^2 = \left( \frac{1}{2}n^{2\alpha} + y_{n-1} \right) \exp \left( 2 (t - T_{n-1}) \right) - \frac{1}{2}n^{2 \alpha}, \quad t \in [T_{n-1}, T_n).
\end{equation*}
We obtain $T_n$ by solving the equation $\E \left| X_{T_n}^n \right|^2 = y_n$, thus\newest{, for $n \geq 2$,}
\begin{align*}
T_n & = \frac{1}{2} \log \left( \frac{y_n + \frac{1}{2} n^{2\alpha} }{y_{n-1} + \frac{1}{2} n^{2 \alpha} } \right) + T_{n-1} \\
& = \frac{1}{2} \sum_{k=2}^n \log \left( \frac{y_k + \frac{1}{2} k^{2\alpha} }{y_{k-1} + \frac{1}{2} k^{2 \alpha} } \right) + T_1.
\end{align*}
By choosing different $\alpha$ and $y_k$ we can give examples of cases when $\tmax = \infty$ or $\tmax < \infty$. It is sufficient to consider convergence of the series 
\begin{equation} \label{series1}
\sum_{k=2}^{\infty} \log \left( \frac{y_k + \frac{1}{2} k^{2\alpha} }{y_{k-1} + \frac{1}{2} k^{2 \alpha} } \right)\newer{.}
\end{equation}

First, let us take $y_k = k$ for all $k \in \N$. Then one can write 
\begin{equation*}
T_n = \frac{1}{2} \sum_{k=2}^n \log \left( \frac{2k + k^{2\alpha} }{2k - 2 + k^{2 \alpha} } \right) + T_1.
\end{equation*}
If $\alpha \in (0, \frac{1}{2}]$, then \newest{
\begin{equation*}
\sum_{k=2}^n \log \left( \frac{2k + k^{2\alpha} }{2k - 2 + k^{2 \alpha} } \right) = \sum_{k=2}^n \log \left(1 + \frac{2}{2k - 2 + k^{2 \alpha} } \right).
\end{equation*}
For large enough $k \in \N$ the term $\log \left(1 + \frac{2}{2k - 2 + k^{2 \alpha} } \right)$ is comparable to $\frac{2}{2k - 2 + k^{2 \alpha}}$. Since
\begin{equation*}
\frac{2}{2k - 2 + k^{2 \alpha}} \geq \frac{2}{3k-2},
\end{equation*}
}
the series \eqref{series1} diverges, thus $\tmax = \infty$. However, we still have 
\begin{equation*}
\sum_{k=2}^{\infty} \frac{1}{1 + k^{2\alpha}} \log \left( \frac{k}{k-1} \right) < \infty,
\end{equation*}
which shows that \eqref{suff_cond_tmax} is not a necessary condition to obtain that $\tmax = \infty$.

Next let us consider the case $\newer{\alpha \in (\frac{1}{2}, 1]}$. If we take $y_k = k$ for all $k \geq 1$, then the series \eqref{series1} converges, hence $\tmax < \infty$, but by taking $y_k = (k!)^k$ for $k \geq 1$ we obtain the series \revised{
\begin{equation*}
 \sum_{k=2}^{\infty} \log \left( \frac{2 (k!)^k + k^{2\alpha} }{2 ((k-1)!)^{k-1} + k^{2\alpha} } \right).
\end{equation*}}
In this case it is easier to consider the condition \eqref{suff_cond_tmax}, which \revised{is now
\begin{align*}
\sum_{k=2}^{\infty} \frac{1}{1+ k^{2\alpha}} \log\left( \frac{(k!)^k }{((k-1)!)^{k-1}}  \right) & = \sum_{k=2}^{\infty} \frac{\log(k!) + (k-1) \log(k) }{1 + k^{2\alpha}} \\
& \geq \sum_{k=2}^{\infty} \frac{ (k-1) \log(k) }{1 + k^{2}} \\
& \geq \sum_{k=2}^{\infty} \frac{\log(k) }{2k} - \sum_{k=2}^{\infty} \frac{ \log(k) }{1 + k^{2}} = \infty,
\end{align*}
}so by Theorem \ref{theo_drift_away} we conclude that $\tmax = \infty$. This shows that the choice of $y_k$ affects the convergence of the series \eqref{suff_cond_tmax}.

\end{exam}
}

\section{Finite lifetime because of oscillation} \label{section_nonexistence}
%
%
In this section we study equations with a finite maximal lifetime due to an oscillating behavior.

We assume that $\gindset = \left\lbrace 1, 2 \right\rbrace$ and that there exists a $y > 0$ such that either $\cA_1 = [0, y)$ or $\cA_1 = [0, y]$, and $\cA_2 = [0, \infty) \setminus \cA_1$. Now the equation \eqref{sde_general} can be written as 
\begin{equation} \label{sde_general_single_level}
\begin{cases}
\begin{aligned}
& X_t = x_0 + \sum_{i =1 }^2 \int_0^t \ind_{\left\lbrace g(s) \in \cA_i \right\rbrace } \sigma_i(s, X_s, \P_{X_s}) \od B_s + \int_0^t b(s, X_s, \P_{X_s}) \od s, \\
& g(t) = \E \left\Vert X_t - z \right\Vert^p.
\end{aligned}
\end{cases}
\end{equation}
We observe that if $\E \left\Vert x_0 - z \right\Vert^p \neq y$, then we can always find a \newest{strongly} unique $L_p$-solution $(T, (X_t)_{t \in [0, T)})$ such that $g(t) \neq y$ for all $t \in [0, T)$:

\begin{obs} \label{obs_some_solution}
Suppose that $\E \left\Vert x_0 - z \right\Vert^p \neq y$ \Newest{and \ref{standard_assumption_first}-\ref{standard_assumption_last} hold}. Then there exists a strongly unique $L_p$-solution $(T_0, (X_t)_{t \in [0, T_0)})$ to \eqref{sde_general_single_level} such that 
\begin{equation*}
T_0 := \inf \left\{ t \geq 0 \hspace{0.25em} \Big| \hspace{0.25em} \lim_{s \uparrow t} g(s) = y \right\} \in (0, \infty].
\end{equation*}
If $T_0 < \infty$, then the solution can be extended to $[0, T_0]$ \revised{and it holds that 
\begin{equation*}
T_0 = \inf \left\{ t \geq 0 \mid g(t) = y \right\}.
\end{equation*}}
\end{obs}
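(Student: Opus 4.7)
Treat the case $\E \left\Vert x_0 - z \right\Vert^p < y$; the case $\E \left\Vert x_0 - z \right\Vert^p > y$ is symmetric with $\sigma_1$ replaced by $\sigma_2$ and the inequalities reversed (here one uses that both admissible forms of $\cA_2$ contain the open half-line $(y, \infty)$, so the argument is uniform in the choice of $\cA_1$). The idea is to solve the classical SDE with the frozen coefficient $\sigma_1$ and observe that as long as the associated moment function stays strictly below $y$, the resulting process automatically satisfies the full equation \eqref{sde_general_single_level}.

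Concretely, let $Y = X^{(1,0,x_0)}$ be the unique global strong solution provided by Theorem \ref{th_strong_existence}, and set $g_Y(t) := \E \left\Vert Y_t - z \right\Vert^p$. The first step is to show that $g_Y$ is continuous on $[0, \infty)$: the sample paths of $Y$ are continuous and, for every $S > 0$, $\E \sup_{t \in [0, S]} \left\Vert Y_t - z \right\Vert^p < \infty$, so dominated convergence produces continuity of $t \mapsto g_Y(t)$. Define
\begin{equation*}
T_0 := \inf \left\{ t \geq 0 \ \Big| \ g_Y(t) = y \right\} \in (0, \infty].
\end{equation*}
Since $g_Y(0) < y$ and $g_Y$ is continuous, $T_0 > 0$, and for every $s \in [0, T_0)$ one has $g_Y(s) < y$, hence $g_Y(s) \in \cA_1$ regardless of whether $\cA_1 = [0, y)$ or $\cA_1 = [0, y]$. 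Consequently $\ind_{\{ g_Y(s) \in \cA_1\}} = 1$ and $\ind_{\{ g_Y(s) \in \cA_2\}} = 0$ on $[0, T_0)$, so $Y$ solves \eqref{sde_general_single_level} on that interval; the integrability conditions in Definition \ref{def_solution} are inherited directly from Theorem \ref{th_strong_existence} together with assumption \ref{standard_assumption_2}.

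For strong uniqueness, let $(S, (\widetilde X_t)_{t \in [0, S)})$ be any other $L_p$-solution with initial condition $x_0$, write $g_{\widetilde X}(t) := \E \| \widetilde X_t - z \|^p$, and note that the same dominated-convergence argument applied with condition (iii) of Definition \ref{def_solution} makes $g_{\widetilde X}$ continuous. Set $S_0 := \inf \{ t \in [0, S) \mid g_{\widetilde X}(t) = y \} \wedge S$. On $[0, S_0)$ we have $g_{\widetilde X}(s) < y \in \cA_1$, so $\widetilde X$ satisfies the classical McKean--Vlasov SDE with coefficient $\sigma_1$; invoking Theorem \ref{th_strong_existence} yields $\P(\widetilde X_t = Y_t) = 1$ for every $t \in [0, S_0)$, hence $g_{\widetilde X} = g_Y$ on $[0, S_0)$ and therefore $S_0 = S \wedge T_0$, which is exactly the strong uniqueness claim.

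Finally, when $T_0 < \infty$, pathwise continuity provides $Y_{T_0} := \lim_{t \uparrow T_0} Y_t$ almost surely, and the stochastic and Lebesgue integrals extend to $t = T_0$ by dominated convergence using the integrability guaranteed by Theorem \ref{th_strong_existence}, giving an $L_p$-solution on $[0, T_0]$. Continuity of $g_Y$ further identifies the version of $T_0$ defined via $\lim_{s \uparrow t} g(s) = y$ with $\inf\{ t \geq 0 \mid g(t) = y \}$, establishing the last equality. The only real obstacle is the continuity of $g_Y$; once this is in hand, the discontinuity in the $L_p$-parameter is invisible on $[0, T_0)$ and everything reduces to the classical statement in Theorem \ref{th_strong_existence}.
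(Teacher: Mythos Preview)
Your proposal is correct and follows essentially the same approach as the paper: freeze the diffusion coefficient at $\sigma_{i_0}$ (where $\E\|x_0-z\|^p\in\cA_{i_0}$), invoke Theorem \ref{th_strong_existence} for the resulting classical McKean--Vlasov SDE, and use continuity of the moment function to see that the frozen solution coincides with a solution of \eqref{sde_general_single_level} up to the first hitting time of $y$. Your write-up is in fact more explicit than the paper's own proof on two points---the dominated-convergence argument for continuity of $t\mapsto\E\|Y_t-z\|^p$ and the strong uniqueness comparison with a second solution---both of which the paper leaves implicit.
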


\begin{proof}
Let $i_0 \in \left\{1, 2\right\}$ be the index satisfying $\newest{\E \left\Vert x_0 - z \right\Vert^p} \in \cA_{i_0}$. \newest{Let $Y = (Y_t)_{t \geq 0}$ be the unique strong solution to 
\begin{equation*}
Y_t = x_0 + \int_0^t \sigma_{i_0}(s, Y_s, \P_{Y_s}) \od B_s + \int_0^t b(s, Y_s, \P_{Y_s}) \od s, \quad t \geq 0,
\end{equation*}
which exists by Theorem \ref{th_strong_existence}, so that  
\begin{equation} \label{finite_moment_again}
\E \sup_{t \in [0, T]} \left\Vert Y_t - z \right\Vert^p < \infty
\end{equation}
for all $T \in (0, \infty)$. Let $T \in (0, \infty]$ such that $\E \left\Vert Y_t - z \right\Vert^p \in \cA_{i_0}$ for all $t \in [0, T)$. Since the function $t \mapsto E \left\Vert Y_t - z \right\Vert^p$ is continuous, we can let $X_t := Y_t$ for $t \in [0, T)$ to obtain a strongly unique $L_p$-solution $(T, (X_t)_{t \in [0, T)})$ to \eqref{sde_general_single_level}, because now $g(t) \in \cA_{i_0}$ for all $t \in [0, T)$.} If $T_0 = \infty$, then clearly we can take $T = T_0 = \infty$. If $T_0 < \infty$, then the limit \newest{
\begin{equation*}
\lim_{t \uparrow T_0} g(t) = \lim_{t \uparrow T_0} \E \left\Vert Y_t - z \right\Vert^p = \E \left\Vert Y_{T_0} - z \right\Vert^p < \infty
\end{equation*}}
exists \newest{by \eqref{finite_moment_again}}, so we can extend the solution to the closed interval $[0, T_0]$.
\end{proof}

Observation \ref{obs_some_solution} implies that if one wants to find conditions for a finite maximal lifetime for the equation \eqref{sde_general_single_level}, then it is sufficient to study the behavior of the moment function $g$ near the level $y$ and show that the solution cannot be extended to $[0, T_0 + \delta)$ for any $\delta > 0$. \Newest{In other words, one needs to show that the equation 
\begin{equation*}
\begin{cases}
\begin{aligned}
& X_t = X_{T_0} + \sum_{i =1 }^2 \int_{T_0}^t \ind_{\left\lbrace g(s) \in \cA_i \right\rbrace } \sigma_i(s, X_s, \P_{X_s}) \od B_s + \int_{T_0}^t b(s, X_s, \P_{X_s}) \od s, \quad t \geq T_0, \\
& g(t) = \E \left\Vert X_t - z \right\Vert^p
\end{aligned}
\end{cases}
\end{equation*}
does not have an $L_p$-solution. In this paper we only consider the case when $p=2$ and the coefficient functions $\sigma_1, \sigma_2$ and $b$ depend only on the time variable.
}

\subsection{Coefficients depending only on the time variable} \label{subsec_nonexistence_time}

Let $p = 2$. Assume that the functions $\sigma_i : [0, \infty) \to \R^{d \times d}$, $i = 1, 2$ and $b : [0, \infty) \to \R^d$ are bounded and Borel measurable. We study the existence of an $L_2$-solution to the equation 
\begin{equation} \label{sde_base_2}
\begin{cases}
\begin{aligned}
& X_t = \revised{x_0} + \sum_{i=1}^2 \int_{T_0}^t  \ind_{ \left\lbrace g(s) \in \cA_i \right\rbrace } \sigma_i(s) \od B_s + \int_{T_0}^t b(s) \od s, \quad t \geq T_0, \\
& g(t) = \E \left\Vert X_t - z\right\Vert^2,
\end{aligned}
\end{cases}
\end{equation}
for some $T_0 \geq 0$, where the initial value $\revised{x_0} \in L_2(\Omega, \cF_{T_0}, \P)$ satisfies $\E \left\Vert \revised{x_0} - z \right\Vert^2 = y$. We notice that if the equation \eqref{sde_base_2} has an $L_2$-solution $(T, (X_t)_{t \in [T_0,  T_1)})$ for some $T_1 > T_0$, then the function $g$ solves the integral equation 
\begin{equation} \label{ode_base}
g(t) = \E \left\Vert \revised{x_0} - z + \int_{T_0}^t b(s) \od s \right\Vert^2 + \sum_{i=1}^2 \int_{T_0}^t \ind_{ \left\lbrace g(s) \in \cA_i \right\rbrace  } \left\Vert \sigma_i(s) \right\Vert^2 \od s, \quad t \in [T_0, T_1).
\end{equation}
Particularly one sees that if \eqref{ode_base} does not have a solution, then the SDE \eqref{sde_base_2} does not have an $L_2$-solution, either.

To be able to formulate sufficient conditions for \newest{the} non-existence of a solution to \eqref{ode_base} we define functions $g_i : [T_0, \infty) \to [0, \infty)$, $i = 1,2$, by
\begin{equation} \label{func_g_k}
g_i(t) := \E \left\Vert \revised{x_0} - z + \int_{T_0}^t b(s) \od s \right\Vert^2 + \int_{T_0}^t \left\Vert \sigma_i(s) \right\Vert^2 \od s
\end{equation}
for $i = 1, 2$. We notice that these functions are defined for all $t \in [T_0, \infty)$ even if the equation \eqref{ode_base} does not have a solution. To motivate the definition of these functions we make the following observation:

\begin{lemm} \label{lemma_time_difference}
Assume that the equation \eqref{ode_base} has a solution on $[T_0, T_1)$ for some $T_1 > T_0$. Suppose that there are a $k \in \left\{1, 2\right\}$ and $r, s \in [T_0, T_1)$ such that $r < s$ and $g(t) \in \cA_k$ for all $t \in (r, s]$. Then 
\begin{equation*}
g(t) - g(r) = g_k(t) - g_k(r)
\end{equation*}
for all $t \in (r, s]$.
\end{lemm}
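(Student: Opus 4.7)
The proof will be essentially a direct unwinding of the integral equation \eqref{ode_base}, so no substantial obstacle is expected; the plan is a careful bookkeeping argument using the disjointness of $\cA_1, \cA_2$ and the hypothesis that $g$ stays inside $\cA_k$ on $(r, s]$.

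First I would write down \eqref{ode_base} at the times $t$ and $r$ (both of which lie in $[T_0, T_1)$, so the equation holds there) and subtract. This gives
\begin{equation*}
g(t) - g(r) = \Delta_b(t, r) + \sum_{i=1}^2 \int_r^t \ind_{\{g(u) \in \cA_i\}} \left\Vert \sigma_i(u) \right\Vert^2 \od u,
\end{equation*}
where
\begin{equation*}
\Delta_b(t, r) := \E \left\Vert x_0 - z + \int_{T_0}^t b(u) \od u \right\Vert^2 - \E \left\Vert x_0 - z + \int_{T_0}^r b(u) \od u \right\Vert^2.
\end{equation*}

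Next I would collapse the sum. For $u \in (r, t] \subseteq (r, s]$ the hypothesis gives $g(u) \in \cA_k$, and since $\cA_1$ and $\cA_2$ are pairwise disjoint, we have $\ind_{\{g(u) \in \cA_k\}} = 1$ and $\ind_{\{g(u) \in \cA_i\}} = 0$ for $i \neq k$. Hence
\begin{equation*}
\sum_{i=1}^2 \int_r^t \ind_{\{g(u) \in \cA_i\}} \left\Vert \sigma_i(u) \right\Vert^2 \od u = \int_r^t \left\Vert \sigma_k(u) \right\Vert^2 \od u.
\end{equation*}

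Finally I would recognize the right-hand side as $g_k(t) - g_k(r)$: by the definition \eqref{func_g_k} one has $g_k(t) - g_k(r) = \Delta_b(t, r) + \int_r^t \|\sigma_k(u)\|^2 \od u$, and combining the two previous displays yields $g(t) - g(r) = g_k(t) - g_k(r)$ for every $t \in (r, s]$, which is exactly the claim. No analytical issue arises because all integrals above are finite: the drift term $\Delta_b$ is finite as $b$ is bounded and $x_0 \in L_2$, and the $\sigma_i$ are bounded on compacts in time.
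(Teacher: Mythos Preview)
Your proposal is correct and follows essentially the same approach as the paper: subtract \eqref{ode_base} evaluated at $t$ and at $r$, use the hypothesis $g(u)\in\cA_k$ on $(r,t]$ together with the disjointness of $\cA_1,\cA_2$ to collapse the indicator sum, and identify the result with $g_k(t)-g_k(r)$. The paper's proof is the same computation, just written more tersely with the abbreviation $A(t):=\E\Vert x_0-z+\int_{T_0}^t b(u)\,\od u\Vert^2$ in place of your $\Delta_b$.
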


\begin{proof}
Let $A(t) := \E \left\Vert \revised{x_0} - z + \int_{T_0}^t b(s) \od s \right\Vert^2$. Then 
\begin{align*}
g(t) - g(r) & = A(t) - A(r) + \sum_{i=1}^2 \int_r^t \ind_{ \left\lbrace g(s) \in \cA_i \right\rbrace  } \left\Vert \sigma_i(s) \right\Vert^2 \od s \\
& = A(t) - A(r) + \int_r^t \left\Vert \sigma_k(s) \right\Vert^2 \od s \\
& = g_k(t) - g_k(r).
\end{align*}
\end{proof}

The proposition below describes cases when the integral equation \eqref{ode_base} does not have a solution.
\begin{prop} \label{th_time_no_sol}
Assume that there exists an $\varepsilon > 0$ such that
\begin{enumerate}[label=\rm{(\Alph*)}]
    \item if $\cA_1 = [0, y)$, then \label{nosol_as_A}
    \begin{enumerate}[label=\small{\rm{(A\arabic*)}}]
        \item the function $g_1$ is non-decreasing on $[T_0, T_0 + \varepsilon]$, and \label{cond_A_1}
        \item the function $g_2$ is strictly decreasing on $[T_0, T_0 + \varepsilon]$,  \label{cond_A_2}
    \end{enumerate}
    \item if $\cA_1 = [0, y]$, then \label{nosol_as_B}
    \begin{enumerate}[label=\small{\rm{(B\arabic*)}}] 
        \item the function $g_1$ is strictly increasing on $[T_0, T_0 + \varepsilon]$, and  \label{cond_B_1}
        \item the function $g_2$ is non-increasing on $[T_0, T_0 + \varepsilon]$.  \label{cond_B_2}
    \end{enumerate}

\end{enumerate}
Then the integral equation \eqref{ode_base} does not have a solution.

\end{prop}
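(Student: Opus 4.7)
The plan is to argue by contradiction: suppose $g : [T_0, T_1) \to [0, \infty)$ is a solution to \eqref{ode_base} with $T_1 > T_0$. After shrinking $T_1$ if necessary we may assume $T_1 \leq T_0 + \varepsilon$. Three facts will be used repeatedly. First, $g$ is continuous on $[T_0, T_1)$, since the right-hand side of \eqref{ode_base} is continuous in $t$ (the drift term by boundedness of $b$ together with $x_0 \in L_2$, and the diffusion terms by boundedness of $\sigma_1, \sigma_2$). Second, a direct computation from \eqref{func_g_k} gives $g(T_0) = \E \left\Vert x_0 - z \right\Vert^2 = y = g_1(T_0) = g_2(T_0)$. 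Third, in both Case \ref{nosol_as_A} and Case \ref{nosol_as_B} the value $y$ belongs to exactly one of $\cA_1, \cA_2$; let $k_0 \in \{1, 2\}$ denote that index and $k_1$ the other, so $\cA_{k_1}$ is relatively open in $[0, \infty)$.

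The core dichotomy is whether $g$ remains in $\cA_{k_0}$ immediately to the right of $T_0$, or whether it enters $\cA_{k_1}$ along a sequence converging to $T_0$. In the first case there exists $\delta \in (0, T_1 - T_0]$ with $g(t) \in \cA_{k_0}$ for all $t \in (T_0, T_0 + \delta]$, so Lemma \ref{lemma_time_difference} applied with $r = T_0$ and $k = k_0$ yields $g(t) = y + g_{k_0}(t) - g_{k_0}(T_0) = g_{k_0}(t)$. In Case \ref{nosol_as_A} this gives $g(t) = g_2(t) < y$ by \ref{cond_A_2}, contradicting $g(t) \in \cA_2 = [y, \infty)$; in Case \ref{nosol_as_B} it gives $g(t) = g_1(t) > y$ by \ref{cond_B_1}, contradicting $g(t) \in \cA_1 = [0, y]$.

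In the opposite case there is a sequence $t_n \downarrow T_0$ with $g(t_n) \in \cA_{k_1}$. Since $\cA_{k_1}$ is open, the set $U := g^{-1}(\cA_{k_1}) \cap (T_0, T_1)$ is open, so each $t_n$ lies in some connected component $(a_n, b_n) \subseteq U$. A short continuity argument identifies $g(a_n) = y$: either $a_n = T_0$ so $g(a_n) = y$ directly, or $a_n > T_0$ and then $g(a_n) \in \cA_{k_0}$ while $g$ takes values in $\cA_{k_1}$ on $(a_n, b_n)$, which forces $g(a_n)$ to be the common boundary point $y$. Applying Lemma \ref{lemma_time_difference} with $r = a_n$ and any $s \in (a_n, b_n)$ yields $g(s) = y + g_{k_1}(s) - g_{k_1}(a_n)$, which by \ref{cond_A_1} is $\geq y$ in Case \ref{nosol_as_A} (contradicting $g(s) \in \cA_1 = [0, y)$) and by \ref{cond_B_2} is $\leq y$ in Case \ref{nosol_as_B} (contradicting $g(s) \in \cA_2 = (y, \infty)$).

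The main technical point, and likely the only step requiring care in the write-up, is the boundary identification $g(a_n) = y$. It rests on the fact that in both cases $\cA_{k_0}$ contains the endpoint $y$ from precisely the side on which $\cA_{k_1}$ lies, so a continuous path going from $\cA_{k_0}$ into $\cA_{k_1}$ must cross $y$. Once this is in place, the argument is symmetric between the two cases: the \emph{strict} monotonicity of $g_{k_0}$ prevents $g$ from remaining on the home side of $y$, while the opposite-direction (non-strict) monotonicity of $g_{k_1}$ prevents $g$ from settling on the other side of $y$ once it has crossed.
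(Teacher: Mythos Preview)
Your proof is correct and follows essentially the same approach as the paper: both apply Lemma \ref{lemma_time_difference} on a maximal subinterval where $g$ stays in a single $\cA_i$ and derive a contradiction from the monotonicity assumptions. The only difference is organizational: the paper splits into the three cases $g(t_1) < y$, $g(t_1) > y$, and $g \equiv y$, whereas you split into ``$g$ stays in $\cA_{k_0}$ near $T_0$'' versus ``$g$ enters $\cA_{k_1}$ arbitrarily close to $T_0$'', using connected components of $g^{-1}(\cA_{k_1})$ to locate the interval; your decomposition is a clean repackaging of the same argument.
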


\begin{proof}
\newest{Let us assume that \eqref{ode_base} has a solution $g$ on the interval $[T_0, T_0 + \delta)$ for some $\delta > 0$, and fix any $t_1 \in (T_0, T_0 + \delta)$.}

First we assume that $g(t_1) \neq y$. Then, by the continuity of the function $g$, there exists a $t_0 \in [\newest{T_0}, t_1)$ such that $g(t_0) = y$ and $g(t) \neq y$ for all $t \in (t_0, t_1]$. Now we have two possible cases:
\begin{enumerate} [label=(\roman*)]
    \item $g(t_1) < y$, and \label{nosol1_proof_case1}
    \item $g(t_1) > y$. \label{nosol1_proof_case2}
\end{enumerate}
In the first case we notice that for any $t \in (t_0, t_1)$ one has
\begin{align*}
0 > g(t) - y = g(t) - g(t_0) = g_1(t) - g_1(t_0) 
\end{align*} 
by Lemma \ref{lemma_time_difference}. However, by assumptions \ref{cond_A_1} and \ref{cond_B_1} the function $g_1$ is either non-decreasing or strictly increasing, hence $g_1(t) - g_1(t_0) \geq 0$, which is a contradiction.

In the case \ref{nosol1_proof_case2} we use a similar argument: for any $t \in (t_0, t_1)$ we have 
\begin{equation*}
0 < g(t) - y = g(t) - g(t_0) = g_2(t) - g_2(t_0), 
\end{equation*}
by Lemma \ref{lemma_time_difference}, but $g_2$ is either \newest{strictly decreasing or non-increasing} by assumptions \ref{cond_A_2} and \ref{cond_B_2}, hence $g_2(t) - g_2(t_0) \leq 0$, which is a contradiction.

The remaining case is that $g(t) = y$ for all $t \in [T_0, t_1]$. However, now
\begin{equation*}
0 = g(t) - y = g(t) - g(t_0) = g_i(t) - g_i(t_0),
\end{equation*}
where $i \in \left\{1, 2\right\}$ satisfies $y \in \cA_i$. The function $g_i$ is strictly monotone for both $i=1$ and $i=2$ by assumptions \ref{cond_A_2} and \ref{cond_B_1}, hence $g_i(t) - g_i(t) \neq 0$, which is a contradiction.

We conclude that the integral equation \eqref{ode_base} has no solution.
\end{proof}

As a corollary we obtain the non-existence of an $L_2$-solution for the SDE \eqref{sde_base_2} under the same assumptions.

\begin{theo} \label{coro_no_solo}
Suppose that the assumptions in Proposition \ref{th_time_no_sol} hold. Then the SDE \eqref{sde_base_2} does not have an $L_2$-solution.
\end{theo}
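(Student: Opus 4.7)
The plan is to show that if an $L_2$-solution $(T, (X_t)_{t \in [T_0, T)})$ to \eqref{sde_base_2} existed on some nontrivial interval $[T_0, T_0 + \delta)$, then its moment function $g(t) = \E \|X_t - z\|^2$ would be a solution to the integral equation \eqref{ode_base} on the same interval, contradicting Proposition \ref{th_time_no_sol}. So the proof is a one-step reduction: assume for contradiction that such a solution exists, derive \eqref{ode_base} for $g$, and invoke Proposition \ref{th_time_no_sol}.

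The derivation of \eqref{ode_base} is where the setting of Section \ref{subsec_nonexistence_time} pays off. Writing
\begin{equation*}
X_t - z = \left( x_0 - z + \int_{T_0}^t b(s)\,\dif s \right) + \int_{T_0}^t \sum_{i=1}^2 \ind_{\{g(s) \in \cA_i\}} \sigma_i(s)\,\dif B_s,
\end{equation*}
I would observe that the deterministic-plus-$\cF_{T_0}$-measurable part is independent of the Brownian increments on $[T_0, t]$, and the stochastic integrand is deterministic (since $\sigma_i$ are deterministic and $g$ is a deterministic function of time). Hence the stochastic integral has zero mean, and the cross term vanishes upon taking expectation. By the It\^o isometry applied to the deterministic integrand together with the disjointness of $\cA_1, \cA_2$, which gives $\|\sum_i \ind_{\{g(s) \in \cA_i\}} \sigma_i(s)\|^2 = \sum_i \ind_{\{g(s) \in \cA_i\}} \|\sigma_i(s)\|^2$, I would conclude
\begin{equation*}
g(t) = \E \left\Vert x_0 - z + \int_{T_0}^t b(s)\,\dif s \right\Vert^2 + \sum_{i=1}^2 \int_{T_0}^t \ind_{\{g(s) \in \cA_i\}} \|\sigma_i(s)\|^2\,\dif s,
\end{equation*}
which is exactly \eqref{ode_base} on $[T_0, T_0 + \delta)$.

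Once \eqref{ode_base} is established for $g$, Proposition \ref{th_time_no_sol} immediately yields a contradiction, completing the proof. I do not anticipate any significant obstacle: the only subtlety is checking that the stochastic integral is a genuine (square-integrable) martingale so the It\^o isometry applies and the cross term vanishes, but this follows from the boundedness of $\sigma_i$ and Definition \ref{def_solution}\,(iv), together with the independence of $\cF_{T_0}$ from the Brownian increments on $[T_0, t]$ built into the usual conditions on the stochastic basis.
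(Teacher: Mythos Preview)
Your proof is correct and takes essentially the same route as the paper, which simply observes in one sentence that any $L_2$-solution to \eqref{sde_base_2} would force its moment function $g$ to solve \eqref{ode_base} (this implication is already recorded just before \eqref{ode_base}), contradicting Proposition \ref{th_time_no_sol}. You merely spell out in more detail why $g$ satisfies \eqref{ode_base}; one minor quibble is that the independence of the Brownian increments on $[T_0,t]$ from $\cF_{T_0}$ follows from $B$ being an $(\cF_t)$-Brownian motion rather than from the usual conditions.
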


\begin{proof}
\newest{This} follows immediately from Proposition \ref{th_time_no_sol} by noticing that if the SDE \eqref{sde_base_2} has an $L_2$-solution $(T_1, (X_t)_{t \in [T_0, T_1)} )$, then also the integral equation \eqref{sde_base_2} needs to have a solution on $[T_0, T_1)$. 
\end{proof}

\newest{
The next example demonstrates how the existence of a solution to \eqref{sde_base_2} can depend on the initial value.
\begin{exam}
Consider the one-dimensional SDE 
\begin{equation} \label{sde_simple_ex}
\begin{cases}
\begin{aligned}
& X_t = 1 + \sqrt{2} \int_{T_0}^t  \ind_{ \left\lbrace g(s) < 1 \right\rbrace } \od B_s - \int_{T_0}^t 1 \od s, \quad t \geq T_0 \geq 0, \\
& g(t) = \E \left| X_t \right|^2.
\end{aligned}
\end{cases}
\end{equation}
Here $y = 1$, $\revised{x_0} \equiv 1$, $z=0$, $\sigma_1 \equiv \sqrt{2}$, $\sigma_2 \equiv 0$ and $b \equiv -1$. One computes that $g_1(t) = 1 + (t-T_0)^2$, which is non-decreasing everywhere, and $g_2(t) = [1 - (t-T_0)]^2$, which is strictly decreasing on $[T_0, T_0 + 1]$, so by Theorem \ref{coro_no_solo} the SDE \eqref{sde_simple_ex} does not have an $L_2$-solution. 

The statement above holds for arbitrary $T_0 \geq 0$. Next, let us consider the following SDE:
\begin{equation} \label{sde_simple_ex2}
\tilde{X}_t = \sqrt{2} \int_{0}^t  \ind_{ \left\lbrace \E \left| \tilde{X}_s \right|^2 < 1 \right\rbrace } \od B_s - \int_{0}^t 1 \od s, \quad t \geq 0.
\end{equation}
We observe that the process $\tilde{X}_t = \sqrt{2} B_{t \wedge S_0} - t$, $t \geq 0$, solves \eqref{sde_simple_ex2}, where 
\begin{equation*}
S_0 := \inf \left\lbrace t \geq 0 \hspace{0.25em} \Big| \hspace{0.25em} \E | \tilde{X}_t |^2 = 1 \right\rbrace = \sqrt{2} - 1.
\end{equation*} 
Moreover, on $[S_0, \infty)$ the process $\tilde{X}$ solves
\begin{equation} \label{sde_simple_ex2_T0}
\begin{cases}
\begin{aligned}
& \tilde{X}_t = \tilde{X}_{S_0} + \sqrt{2} \int_{S_0}^t \ind_{ \left\lbrace \tilde{g}(s) < 1 \right\rbrace } \od B_s - \int_{S_0}^t 1 \od s, \quad t \geq S_0, \\
& \tilde{g}(t) = \E \left| \tilde{X}_t \right|^2,
\end{aligned}
\end{cases}
\end{equation}
which corresponds with \eqref{sde_simple_ex}, but with a different initial value. Clearly it holds $\E \left| \tilde{X}_{S_0} \right|^2 = 1$, but because $\E \tilde{X}_{S_0} = 1 - \sqrt{2} = - S_0$, the function
\begin{equation*}
\tilde{g}_2(t) = \E \left| \tilde{X}_{S_0} - (t - S_0) \right|^2 =(1 - S_0^2) + t^2
\end{equation*}
is strictly increasing for $t \geq S_0$. This shows that changing the initial value in \eqref{sde_simple_ex} affects the existence of a solution, even if the $L_2$-norm \Newest{of the initial condition} remains the same, \Newest{because the function $g_2$ is strictly increasing on $[S_0, S_0 + 1]$, but $\tilde{g}_2$ is non-decreasing everywhere.}


\end{exam}
}

\subsection{Oscillating coefficients}

In Theorem \ref{coro_no_solo} we require monotonicity of the functions $g_1$ and $g_2$ on some interval $[T_0, T_0 + \varepsilon)$ for some $\varepsilon > 0$. Next we construct an example showing that the monotonicity of the function $g_1$ is not necessary to obtain the conclusion of Theorem \ref{coro_no_solo} if the function $g_1$ is oscillating in a sufficient way.

Let $a_n := \frac{1}{2^n}$ for $n \in \N \cup \left\{ 0 \right\}$ and define $\Delta_n := \frac{1}{4} \left( a_{n-1} - a_n \right) = \frac{1}{2^{n+2}}$ for $n \in \N$. Let us consider the equation 
\begin{equation} \label{sde_oscillation_base}
\begin{cases}
\begin{aligned}
& X_t = 1 + \int_0^t \ind_{ \left\lbrace g(s) < 1 \right\rbrace }  \sqrt{2} \left(   \sum_{n=0}^\infty  \ind_{ \left\{ s \in (a_n + \Delta_n, a_n + 3 \Delta_n] \right\} }  \right) \od B_s - \int_0^t \frac{ \ind_{\left\{ s < 1 - \alpha \right\}} }{2 \sqrt{1 - s}} \od s, \\
& g(t) = \E \left| X_t \right|^2,
\end{aligned}
\end{cases}
\end{equation}
where the constant $\alpha \in (0, 1)$ is \newest{chosen} to make the drift coefficient
\begin{equation*}
b(t) = \frac{ \ind_{\left\{ t < 1 - \alpha \right\}} }{2 \sqrt{1 - t}}
\end{equation*}
bounded. The diffusion coefficients are now
\begin{equation*}
\sigma_1(t) = \sqrt{2} \left(  \sum_{n=0}^\infty  \ind_{ \left\{ t \in (a_n + \Delta_n, a_n + 3 \Delta_n] \right\} }  \right) 
\end{equation*}
and $\sigma_2 \equiv 0$. The function $\sigma_1$ is illustrated in Figure \eqref{osc_figure1} above. We see that equation \eqref{sde_oscillation_base} corresponds with equation \eqref{sde_base_2} by letting $T_0 = 0$, $y = 1$, $z = 0$ and $\revised{x_0} \equiv 1$. \newest{Moreover, we have $\cA_1 = [0, 1)$ and $\cA_2 = [1, \infty)$.}

If the SDE \eqref{sde_oscillation_base} has an $L_2$-solution $(T, (X_t)_{t \in [0, T)})$, where $T \in (0, 1 - \alpha)$, then the moment function $g$ solves the integral equation 
\begin{equation} \label{ode_oscillation}
g(t) = 1 - t + 2 \sum_{n=1}^\infty \int_0^t \ind_{ \left\lbrace g(s) < 1 \right\rbrace } \ind_{ \left\{ s \in (a_n + \Delta_n, a_n + 3 \Delta_n] \right\} } \od s, \quad t \in [0, T).
\end{equation}
\newest{We get
\begin{equation*}
g_1(t) = 1 - \left( t \wedge 1 \right) + 2 \sum_{n=1}^\infty \int_0^t \ind_{ \left\{ s \in (a_n + \Delta_n, a_n + 3 \Delta_n] \right\} } \od s
\end{equation*}
and 
\begin{equation*}
g_2(t) = 1 - (t \wedge 1)
\end{equation*}
for $t \in [0, T)$.} We see that the condition \ref{cond_A_1} in Proposition \ref{th_time_no_sol} is not satisfied in $T_0 = 0$ since the function $g_1$ is not monotone on $[0, \varepsilon]$ for any $\varepsilon > 0$ as seen in Figure \eqref{osc_figure2} above. However, the equation \eqref{sde_oscillation_base} still does not have a solution.

\renewcommand{\thesubfigure}{\alph{subfigure}}
%
%
\begin{figure}%
\centering

\subfloat[][\centering  \label{osc_figure1}]{
\begin{tikzpicture}[scale = 0.75]
\begin{axis}[
   xmin = 0, xmax = 1,
   ymin = -0.25, ymax = 2] 
	\draw[line width=0.25mm, black , dotted] (axis cs:0,0) -- node[left]{} (axis cs:1,0);
	\draw[line width=0.5mm, blue ] (axis cs:0.625,1.4142135623730951) -- node[left]{} (axis cs:0.875,1.4142135623730951);
	\draw[line width=0.5mm, blue ] (axis cs:0.875,0) -- node[left]{} (axis cs:1,0);
	\draw[line width=0.5mm, blue ] (axis cs:0.5,0) -- node[left]{} (axis cs:0.625,0);
	\draw[line width=0.5mm, blue ] (axis cs:0.3125,1.4142135623730951) -- node[left]{} (axis cs:0.4375,1.4142135623730951);
	\draw[line width=0.5mm, blue ] (axis cs:0.4375,0) -- node[left]{} (axis cs:0.5,0);
	\draw[line width=0.5mm, blue ] (axis cs:0.25,0) -- node[left]{} (axis cs:0.3125,0);
	\draw[line width=0.5mm, blue ] (axis cs:0.15625,1.4142135623730951) -- node[left]{} (axis cs:0.21875,1.4142135623730951);
	\draw[line width=0.5mm, blue ] (axis cs:0.21875,0) -- node[left]{} (axis cs:0.25,0);
	\draw[line width=0.5mm, blue ] (axis cs:0.125,0) -- node[left]{} (axis cs:0.15625,0);
	\draw[line width=0.5mm, blue ] (axis cs:0.078125,1.4142135623730951) -- node[left]{} (axis cs:0.109375,1.4142135623730951);
	\draw[line width=0.5mm, blue ] (axis cs:0.109375,0) -- node[left]{} (axis cs:0.125,0);
	\draw[line width=0.5mm, blue ] (axis cs:0.0625,0) -- node[left]{} (axis cs:0.078125,0);
	\draw[line width=0.5mm, blue ] (axis cs:0.0390625,1.4142135623730951) -- node[left]{} (axis cs:0.0546875,1.4142135623730951);
	\draw[line width=0.5mm, blue ] (axis cs:0.0546875,0) -- node[left]{} (axis cs:0.0625,0);
	\draw[line width=0.5mm, blue ] (axis cs:0.03125,0) -- node[left]{} (axis cs:0.0390625,0);
	\draw[line width=0.5mm, blue ] (axis cs:0.01953125,1.4142135623730951) -- node[left]{} (axis cs:0.02734375,1.4142135623730951);
	\draw[line width=0.5mm, blue ] (axis cs:0.02734375,0) -- node[left]{} (axis cs:0.03125,0);
	\draw[line width=0.5mm, blue ] (axis cs:0.015625,0) -- node[left]{} (axis cs:0.01953125,0);
	\draw[line width=0.5mm, blue ] (axis cs:0.009765625,1.4142135623730951) -- node[left]{} (axis cs:0.013671875,1.4142135623730951);
	\draw[line width=0.5mm, blue ] (axis cs:0.013671875,0) -- node[left]{} (axis cs:0.015625,0);
	\draw[line width=0.5mm, blue ] (axis cs:0.0078125,0) -- node[left]{} (axis cs:0.009765625,0);
	\draw[line width=0.5mm, blue ] (axis cs:0.0048828125,1.4142135623730951) -- node[left]{} (axis cs:0.0068359375,1.4142135623730951);
	\draw[line width=0.5mm, blue ] (axis cs:0.0068359375,0) -- node[left]{} (axis cs:0.0078125,0);
	\draw[line width=0.5mm, blue ] (axis cs:0.00390625,0) -- node[left]{} (axis cs:0.0048828125,0);
	\draw[line width=0.5mm, blue ] (axis cs:0.00244140625,1.4142135623730951) -- node[left]{} (axis cs:0.00341796875,1.4142135623730951);
	\draw[line width=0.5mm, blue ] (axis cs:0.00341796875,0) -- node[left]{} (axis cs:0.00390625,0);
	\draw[line width=0.5mm, blue ] (axis cs:0.001953125,0) -- node[left]{} (axis cs:0.00244140625,0);
	\draw[line width=0.5mm, blue ] (axis cs:0.001220703125,1.4142135623730951) -- node[left]{} (axis cs:0.001708984375,1.4142135623730951);
	\draw[line width=0.5mm, blue ] (axis cs:0.001708984375,0) -- node[left]{} (axis cs:0.001953125,0);
	\draw[line width=0.5mm, blue ] (axis cs:0.0009765625,0) -- node[left]{} (axis cs:0.001220703125,0);
	\draw[line width=0.5mm, blue ] (axis cs:0.0006103515625,1.4142135623730951) -- node[left]{} (axis cs:0.0008544921875,1.4142135623730951);
	\draw[line width=0.5mm, blue ] (axis cs:0.0008544921875,0) -- node[left]{} (axis cs:0.0009765625,0);
	\draw[line width=0.5mm, blue ] (axis cs:0.00048828125,0) -- node[left]{} (axis cs:0.0006103515625,0);
	\draw[line width=0.5mm, blue ] (axis cs:0.00030517578125,1.4142135623730951) -- node[left]{} (axis cs:0.00042724609375,1.4142135623730951);
	\draw[line width=0.5mm, blue ] (axis cs:0.00042724609375,0) -- node[left]{} (axis cs:0.00048828125,0);
	\draw[line width=0.5mm, blue ] (axis cs:0.000244140625,0) -- node[left]{} (axis cs:0.00030517578125,0);
	\draw[line width=0.5mm, blue ] (axis cs:0.000152587890625,1.4142135623730951) -- node[left]{} (axis cs:0.000213623046875,1.4142135623730951);
	\draw[line width=0.5mm, blue ] (axis cs:0.000213623046875,0) -- node[left]{} (axis cs:0.000244140625,0);
	\draw[line width=0.5mm, blue ] (axis cs:0.0001220703125,0) -- node[left]{} (axis cs:0.000152587890625,0);
	\draw[line width=0.5mm, blue ] (axis cs:0.0000762939453125,1.4142135623730951) -- node[left]{} (axis cs:0.0001068115234375,1.4142135623730951);
	\draw[line width=0.5mm, blue ] (axis cs:0.0001068115234375,0) -- node[left]{} (axis cs:0.0001220703125,0);
	\draw[line width=0.5mm, blue ] (axis cs:0.00006103515625,0) -- node[left]{} (axis cs:0.0000762939453125,0);
	\draw[line width=0.5mm, blue ] (axis cs:0.00003814697265625,1.4142135623730951) -- node[left]{} (axis cs:0.00005340576171875,1.4142135623730951);
	\draw[line width=0.5mm, blue ] (axis cs:0.00005340576171875,0) -- node[left]{} (axis cs:0.00006103515625,0);
	\draw[line width=0.5mm, blue ] (axis cs:0.000030517578125,0) -- node[left]{} (axis cs:0.00003814697265625,0);
	\draw[line width=0.5mm, blue ] (axis cs:0.000019073486328125,1.4142135623730951) -- node[left]{} (axis cs:0.000026702880859375,1.4142135623730951);
	\draw[line width=0.5mm, blue ] (axis cs:0.000026702880859375,0) -- node[left]{} (axis cs:0.000030517578125,0);
	\draw[line width=0.5mm, blue ] (axis cs:0.0000152587890625,0) -- node[left]{} (axis cs:0.000019073486328125,0);
\end{axis}
\end{tikzpicture}
}
\quad
\subfloat[][\centering  \label{osc_figure2}]{
\begin{tikzpicture}[scale = 0.75]
\begin{axis}[
   xmin = 0, xmax = 1,
   ymin = 0.5, ymax = 1.5] 
	\draw[line width=0.25mm, black , dotted] (axis cs:0,1) -- node[left]{} (axis cs:1,1);
	\draw[line width=0.5mm, blue , cap=round] (axis cs:1,1) -- node[left]{} (axis cs:0.875,1.125);
	\draw[line width=0.5mm, blue , cap=round] (axis cs:0.875,1.125) -- node[left]{} (axis cs:0.75,1);
	\draw[line width=0.5mm, blue , cap=round] (axis cs:0.75,1) -- node[left]{} (axis cs:0.625,0.875);
	\draw[line width=0.5mm, blue , cap=round] (axis cs:0.625,0.875) -- node[left]{} (axis cs:0.5,1);
	\draw[line width=0.5mm, blue , cap=round] (axis cs:0.5,1) -- node[left]{} (axis cs:0.4375,1.0625);
	\draw[line width=0.5mm, blue , cap=round] (axis cs:0.4375,1.0625) -- node[left]{} (axis cs:0.375,1);
	\draw[line width=0.5mm, blue , cap=round] (axis cs:0.375,1) -- node[left]{} (axis cs:0.3125,0.9375);
	\draw[line width=0.5mm, blue , cap=round] (axis cs:0.3125,0.9375) -- node[left]{} (axis cs:0.25,1);
	\draw[line width=0.5mm, blue , cap=round] (axis cs:0.25,1) -- node[left]{} (axis cs:0.21875,1.03125);
	\draw[line width=0.5mm, blue , cap=round] (axis cs:0.21875,1.03125) -- node[left]{} (axis cs:0.1875,1);
	\draw[line width=0.5mm, blue , cap=round] (axis cs:0.1875,1) -- node[left]{} (axis cs:0.15625,0.96875);
	\draw[line width=0.5mm, blue , cap=round] (axis cs:0.15625,0.96875) -- node[left]{} (axis cs:0.125,1);
	\draw[line width=0.5mm, blue , cap=round] (axis cs:0.125,1) -- node[left]{} (axis cs:0.109375,1.015625);
	\draw[line width=0.5mm, blue , cap=round] (axis cs:0.109375,1.015625) -- node[left]{} (axis cs:0.09375,1);
	\draw[line width=0.5mm, blue , cap=round] (axis cs:0.09375,1) -- node[left]{} (axis cs:0.078125,0.984375);
	\draw[line width=0.5mm, blue , cap=round] (axis cs:0.078125,0.984375) -- node[left]{} (axis cs:0.0625,1);
	\draw[line width=0.5mm, blue , cap=round] (axis cs:0.0625,1) -- node[left]{} (axis cs:0.0546875,1.0078125);
	\draw[line width=0.5mm, blue , cap=round] (axis cs:0.0546875,1.0078125) -- node[left]{} (axis cs:0.046875,1);
	\draw[line width=0.5mm, blue , cap=round] (axis cs:0.046875,1) -- node[left]{} (axis cs:0.0390625,0.9921875);
	\draw[line width=0.5mm, blue , cap=round] (axis cs:0.0390625,0.9921875) -- node[left]{} (axis cs:0.03125,1);
	\draw[line width=0.5mm, blue , cap=round] (axis cs:0.03125,1) -- node[left]{} (axis cs:0.02734375,1.00390625);
	\draw[line width=0.5mm, blue , cap=round] (axis cs:0.02734375,1.00390625) -- node[left]{} (axis cs:0.0234375,1);
	\draw[line width=0.5mm, blue , cap=round] (axis cs:0.0234375,1) -- node[left]{} (axis cs:0.01953125,0.99609375);
	\draw[line width=0.5mm, blue , cap=round] (axis cs:0.01953125,0.99609375) -- node[left]{} (axis cs:0.015625,1);
	\draw[line width=0.5mm, blue , cap=round] (axis cs:0.015625,1) -- node[left]{} (axis cs:0.013671875,1.001953125);
	\draw[line width=0.5mm, blue , cap=round] (axis cs:0.013671875,1.001953125) -- node[left]{} (axis cs:0.01171875,1);
	\draw[line width=0.5mm, blue , cap=round] (axis cs:0.01171875,1) -- node[left]{} (axis cs:0.009765625,0.998046875);
	\draw[line width=0.5mm, blue , cap=round] (axis cs:0.009765625,0.998046875) -- node[left]{} (axis cs:0.0078125,1);
	\draw[line width=0.5mm, blue , cap=round] (axis cs:0.0078125,1) -- node[left]{} (axis cs:0.0068359375,1.0009765625);
	\draw[line width=0.5mm, blue , cap=round] (axis cs:0.0068359375,1.0009765625) -- node[left]{} (axis cs:0.005859375,1);
	\draw[line width=0.5mm, blue , cap=round] (axis cs:0.005859375,1) -- node[left]{} (axis cs:0.0048828125,0.9990234375);
	\draw[line width=0.5mm, blue , cap=round] (axis cs:0.0048828125,0.9990234375) -- node[left]{} (axis cs:0.00390625,1);
	\draw[line width=0.5mm, blue , cap=round] (axis cs:0.00390625,1) -- node[left]{} (axis cs:0.00341796875,1.00048828125);
	\draw[line width=0.5mm, blue , cap=round] (axis cs:0.00341796875,1.00048828125) -- node[left]{} (axis cs:0.0029296875,1);
	\draw[line width=0.5mm, blue , cap=round] (axis cs:0.0029296875,1) -- node[left]{} (axis cs:0.00244140625,0.99951171875);
	\draw[line width=0.5mm, blue , cap=round] (axis cs:0.00244140625,0.99951171875) -- node[left]{} (axis cs:0.001953125,1);
	\draw[line width=0.5mm, blue , cap=round] (axis cs:0.001953125,1) -- node[left]{} (axis cs:0.001708984375,1.000244140625);
	\draw[line width=0.5mm, blue , cap=round] (axis cs:0.001708984375,1.000244140625) -- node[left]{} (axis cs:0.00146484375,1);
	\draw[line width=0.5mm, blue , cap=round] (axis cs:0.00146484375,1) -- node[left]{} (axis cs:0.001220703125,0.999755859375);
	\draw[line width=0.5mm, blue , cap=round] (axis cs:0.001220703125,0.999755859375) -- node[left]{} (axis cs:0.0009765625,1);
	\draw[line width=0.5mm, blue , cap=round] (axis cs:0.0009765625,1) -- node[left]{} (axis cs:0.0008544921875,1.0001220703125);
	\draw[line width=0.5mm, blue , cap=round] (axis cs:0.0008544921875,1.0001220703125) -- node[left]{} (axis cs:0.000732421875,1);
	\draw[line width=0.5mm, blue , cap=round] (axis cs:0.000732421875,1) -- node[left]{} (axis cs:0.0006103515625,0.9998779296875);
	\draw[line width=0.5mm, blue , cap=round] (axis cs:0.0006103515625,0.9998779296875) -- node[left]{} (axis cs:0.00048828125,1);
	\draw[line width=0.5mm, blue , cap=round] (axis cs:0.00048828125,1) -- node[left]{} (axis cs:0.00042724609375,1.00006103515625);
	\draw[line width=0.5mm, blue , cap=round] (axis cs:0.00042724609375,1.00006103515625) -- node[left]{} (axis cs:0.0003662109375,1);
	\draw[line width=0.5mm, blue , cap=round] (axis cs:0.0003662109375,1) -- node[left]{} (axis cs:0.00030517578125,0.99993896484375);
	\draw[line width=0.5mm, blue , cap=round] (axis cs:0.00030517578125,0.99993896484375) -- node[left]{} (axis cs:0.000244140625,1);
	\draw[line width=0.5mm, blue , cap=round] (axis cs:0.000244140625,1) -- node[left]{} (axis cs:0.000213623046875,1.000030517578125);
	\draw[line width=0.5mm, blue , cap=round] (axis cs:0.000213623046875,1.000030517578125) -- node[left]{} (axis cs:0.00018310546875,1);
	\draw[line width=0.5mm, blue , cap=round] (axis cs:0.00018310546875,1) -- node[left]{} (axis cs:0.000152587890625,0.999969482421875);
	\draw[line width=0.5mm, blue , cap=round] (axis cs:0.000152587890625,0.999969482421875) -- node[left]{} (axis cs:0.0001220703125,1);
	\draw[line width=0.5mm, blue , cap=round] (axis cs:0.0001220703125,1) -- node[left]{} (axis cs:0.0001068115234375,1.0000152587890625);
	\draw[line width=0.5mm, blue , cap=round] (axis cs:0.0001068115234375,1.0000152587890625) -- node[left]{} (axis cs:0.000091552734375,1);
	\draw[line width=0.5mm, blue , cap=round] (axis cs:0.000091552734375,1) -- node[left]{} (axis cs:0.0000762939453125,0.9999847412109375);
	\draw[line width=0.5mm, blue , cap=round] (axis cs:0.0000762939453125,0.9999847412109375) -- node[left]{} (axis cs:0.00006103515625,1);
	\draw[line width=0.5mm, blue , cap=round] (axis cs:0.00006103515625,1) -- node[left]{} (axis cs:0.00005340576171875,1.0000076293945312);
	\draw[line width=0.5mm, blue , cap=round] (axis cs:0.00005340576171875,1.0000076293945312) -- node[left]{} (axis cs:0.0000457763671875,1);
	\draw[line width=0.5mm, blue , cap=round] (axis cs:0.0000457763671875,1) -- node[left]{} (axis cs:0.00003814697265625,0.9999923706054688);
	\draw[line width=0.5mm, blue , cap=round] (axis cs:0.00003814697265625,0.9999923706054688) -- node[left]{} (axis cs:0.000030517578125,1);
	\draw[line width=0.5mm, blue , cap=round] (axis cs:0.000030517578125,1) -- node[left]{} (axis cs:0.000026702880859375,1.0000038146972656);
	\draw[line width=0.5mm, blue , cap=round] (axis cs:0.000026702880859375,1.0000038146972656) -- node[left]{} (axis cs:0.00002288818359375,1);
	\draw[line width=0.5mm, blue , cap=round] (axis cs:0.00002288818359375,1) -- node[left]{} (axis cs:0.000019073486328125,0.9999961853027344);
	\draw[line width=0.5mm, blue , cap=round] (axis cs:0.000019073486328125,0.9999961853027344) -- node[left]{} (axis cs:0.0000152587890625,1);
\end{axis}
\end{tikzpicture}
}

\caption{}
\end{figure}

\begin{theo} \label{th_osc_no_sol}
The equation \eqref{sde_oscillation_base} does not have an $L_2$-solution.
\end{theo}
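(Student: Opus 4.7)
My plan is to assume for contradiction that \eqref{sde_oscillation_base} admits an $L_2$-solution on some $[0, T]$ with $T > 0$. Its moment function $g$ is then continuous with $g(0) = 1$ and satisfies the integral equation \eqref{ode_oscillation} on $[0, T]$; it suffices to rule out any such $g$. I will track the values $b_n := g(a_n)$ across each block $(a_n, a_{n-1}]$, which splits into two gap intervals (where $\sigma_1 \equiv 0$) flanking the oscillation interval $I_n := (a_n + \Delta_n, a_n + 3\Delta_n]$, and derive a rigidity on the sequence $(b_n)$ that is incompatible with $b_n \to g(0) = 1$.

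The main obstacle I anticipate is ruling out that $g$ takes the value $1$ anywhere in the open interior of some $I_n$. Suppose $g(t_0) = 1$ for such a $t_0$. Any subinterval $(t_0, t_0 + \varepsilon) \subset I_n$ on which $g > 1$ gives, by Lemma \ref{lemma_time_difference} with $k = 2$, $g(t) = 1 + g_2(t) - g_2(t_0) = 1 - (t - t_0) < 1$, a contradiction; similarly if $g < 1$ on such a subinterval, the Lemma with $k = 1$, together with $g_1'(t) = -1 + 2 = 1$ on $I_n$, yields $g(t) = 1 + (t - t_0) > 1$. Flat pieces where $g \equiv 1$ are ruled out by differentiating \eqref{ode_oscillation}, which forces derivative $-1$ there since $\ind_{\{g(s) < 1\}}$ vanishes. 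Consequently, on each $I_n$ contained in $[0, T]$, $g$ lies strictly below $1$ (Case $\mathrm{A}_n$) or strictly above $1$ (Case $\mathrm{B}_n$).

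A piecewise application of Lemma \ref{lemma_time_difference} to the two gaps and to $I_n$ then shows that $g$ decreases by $\Delta_n$ across each gap and changes by $\pm 2\Delta_n$ across $I_n$, which yields $b_{n-1} = b_n$ in Case $\mathrm{A}_n$ (with admissibility condition $b_n < 1 - \Delta_n$) and $b_{n-1} = b_n - a_n$ in Case $\mathrm{B}_n$ (with $b_n > 1 + 3\Delta_n$). I will then check that from Case $\mathrm{A}_n$ one has $b_{n-1} = b_n < 1$, incompatible with Case $\mathrm{B}_{n-1}$'s requirement $b_{n-1} > 1 + 3\Delta_{n-1}$, so that stage $n-1$ is again in Case $\mathrm{A}$; symmetrically Case $\mathrm{B}_n$ forces Case $\mathrm{B}_{n-1}$. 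Fixing $n^*$ with $a_{n^*-1} \leq T$, this rigidity implies that either every stage $n \geq n^*$ is in Case $\mathrm{A}$, so that $(b_n)_{n \geq n^*}$ is constant with value $< 1 - \Delta_{n^*} < 1$, or every stage is in Case $\mathrm{B}$, so that $(b_n)_{n \geq n^*}$ is strictly increasing in $n$ with every term exceeding $1$. Both scenarios contradict $b_n = g(a_n) \to g(0) = 1$, completing the argument.
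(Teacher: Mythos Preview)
Your argument is correct and takes a genuinely different route from the paper. Both approaches hinge on the same local obstruction --- that $g$ cannot equal $1$ inside an oscillation interval $I_n$, which is essentially the content of Proposition~\ref{th_time_no_sol} applied locally --- but they organise the global contradiction differently.

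The paper first proves an auxiliary result (Lemma~\ref{lemma_delayed_osc}): if one restarts the integral equation at any time $s>0$ with value $1$, the maximal existence time $m(s)$ satisfies $m(s)\le a_{\kappa(s)}+9\Delta_{\kappa(s)}\to 0$. The main proof then splits into three scenarios for $g$ near $0$ (infinitely many hits of level $1$, or $g>1$, or $g<1$ on a right-neighbourhood of $0$) and uses the lemma or direct monotonicity to exclude each. By contrast, you discretise along the block endpoints $a_n$, obtain the explicit recursions $b_{n-1}=b_n$ (Case~A) and $b_{n-1}=b_n-a_n$ (Case~B) with mutually exclusive admissibility ranges, and show a rigidity forcing all large-$n$ stages to share the same case; either possibility then contradicts $b_n\to g(0)=1$ by continuity.

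What each approach buys: the paper's Lemma~\ref{lemma_delayed_osc} gives a quantitative bound on $m(s)$ valid for \emph{any} restart time $s$, not only the dyadic points $a_n$, so it carries more information and would adapt readily to perturbations of the construction. Your argument is more elementary --- it avoids the auxiliary lemma entirely and reduces the problem to a transparent two-state recursion --- at the price of being tied to the specific block structure. Two small points worth tightening in a full write-up: your trichotomy (right-neighbourhood with $g>1$, $g<1$, or $g\equiv 1$) is not a priori exhaustive, so one should pass to the last time in $[t_0,t_1]$ where $g=1$ to reduce to an interval of constant sign; and the admissibility inequalities are non-strict ($b_n\le 1-\Delta_n$, $b_n\ge 1+3\Delta_n$) rather than strict, which does not affect your rigidity step.
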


We first prove the following lemma.

\begin{lemm} \label{lemma_delayed_osc}
For $s \in (0, 1)$ consider the equation 
\begin{equation} \label{ode_delayed_oscillation}
g_s(t) = 1 - \left(t - s\right) + 2 \sum_{n=1}^\infty \int_s^t \ind_{ \left\lbrace g_s(u) < 1 \right\rbrace } \ind_{ \left\{ u \in (a_n + \Delta_n, a_n + 3 \Delta_n] \right\} } \od u, \quad t \in [s, 1).
\end{equation}
Let 
\begin{equation*}
m(s) := \sup \left\{ t \in [s, 1) \mid \textrm{ \eqref{ode_delayed_oscillation} has a solution on } [s, t] \right\}
\end{equation*}
and $\kappa(s) := \inf \left\{ k \geq 0 \mid s \geq a_k \right\} $. Then, if $\kappa(s) > 1$, one has 
\begin{equation*}
m(s) \leq a_{\kappa(s)} + 9 \Delta_{\kappa(s)}.
\end{equation*}
In particular, $m(s) \to 0$ as $s \downarrow 0$.

\end{lemm}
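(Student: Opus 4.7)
The plan is to reformulate \eqref{ode_delayed_oscillation} so that it becomes a purely combinatorial statement about a piecewise linear function $F$, and then to show that $F$ must cross zero inside a high-$\sigma$ interval before time $t^* := a_{\kappa(s)} + 9\Delta_{\kappa(s)}$. Write $k := \kappa(s) \geq 2$ and $I_n := (a_n + \Delta_n, a_n + 3\Delta_n]$. Suppose $g_s$ solves \eqref{ode_delayed_oscillation} on $[s, t]$. Outside of $\bigcup_n I_n$ the equation forces $g_s'(u) = -1$, so $g_s$ is strictly decreasing there; combined with $g_s(s) = 1$, this rules out $g_s \geq 1$ off the high-$\sigma$ set. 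Hence, letting $\tau$ be the first $u > s$ with $g_s(u) = 1$, necessarily $\tau \in \bigcup_n \overline{I_n}$ and $g_s < 1$ strictly on $(s, \tau)$. On $[s, \tau]$ the indicator $\ind_{\{g_s < 1\}}$ may be dropped, yielding
\[
g_s(u) = 1 + F(u), \qquad F(u) := 2\int_s^u \sum_{n \geq 1} \ind_{I_n}(v)\, dv - (u-s),
\]
with $F(s) = F(\tau) = 0$ and $F < 0$ on $(s, \tau)$.

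Next I would record the \emph{oscillation trap}: if $\tau \in I_n$ (including the right endpoint), no extension past $\tau$ exists. Indeed, for $u > \tau$ in $I_n$, the sub-case $g_s(u) < 1$ gives $g_s(u) = 1 + (u-\tau) > 1$, while $g_s(u) \geq 1$ gives $g_s(u) = 1 - (u-\tau) < 1$, both contradictions, in the spirit of Proposition \ref{th_time_no_sol}. The degenerate case $s \in \bigcup_n \overline{I_n}$ (in particular the left endpoints $s = a_n + \Delta_n$) triggers the same trap at $\tau = s$, giving $m(s) = s$ trivially. Hence in every case $m(s) \leq \tau$, and it remains to bound $\tau$.

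The core calculation is to show $F(t^*) > 0$ in the non-trivial cases, so that $\tau \leq t^*$ by continuity of $F$. The key structural point is that $t^* = \tfrac{13}{4} a_k$ lies inside $I_{k-1}$ (because $a_{k-1} + \Delta_{k-1} = \tfrac{10}{4} a_k < \tfrac{13}{4} a_k < \tfrac{14}{4} a_k = a_{k-1} + 3\Delta_{k-1}$) and the only high-$\sigma$ intervals intersecting $[s, t^*]$ are $I_k$ and $I_{k-1}$, since $I_{k+1} \subset [0, a_k)$ lies below $s$. Rescaling in units of $\Delta_k$ (so $a_k = 4\Delta_k$, $I_k = (5\Delta_k, 7\Delta_k]$, $I_{k-1} = (10\Delta_k, 14\Delta_k]$, $t^* = 13\Delta_k$) reduces everything to two sub-cases. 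For $s \in [4\Delta_k, 5\Delta_k)$ both $I_k$ and the segment $(10\Delta_k, 13\Delta_k]$ of $I_{k-1}$ lie in $[s, t^*]$, so $F(t^*) = 10\Delta_k - (13\Delta_k - s) = s - 3\Delta_k \geq \Delta_k$. For $s \in (7\Delta_k, 8\Delta_k)$ only the segment of $I_{k-1}$ contributes, so $F(t^*) = 6\Delta_k - (13\Delta_k - s) = s - 7\Delta_k > 0$. The remaining range $s \in [5\Delta_k, 7\Delta_k] = \overline{I_k}$ is precisely the degenerate case already handled.

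Combining these steps, $m(s) \leq \tau \leq t^* = a_{\kappa(s)} + 9\Delta_{\kappa(s)}$. The final assertion $m(s) \to 0$ as $s \downarrow 0$ is immediate from $\kappa(s) \to \infty$ and $a_{\kappa(s)} + 9\Delta_{\kappa(s)} = 13/2^{\kappa(s)+2}$. The main technical point I anticipate is justifying the reduction $g_s = 1 + F$ on $[s, \tau]$, namely the dichotomy that $g_s$ cannot touch $1$ outside the high-$\sigma$ set before $\tau$; this is the reason for first establishing strict monotonicity of $g_s$ off $\bigcup_n I_n$. The choice of the constant $9$ is sharp: at $s = 7\Delta_k$ one has $F(t^*) = 0$, so any smaller constant would fail.
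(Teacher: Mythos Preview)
Your approach is essentially the paper's: both identify the first return time of $g_s$ to level $1$, show it lands inside a high-$\sigma$ interval, and invoke Proposition~\ref{th_time_no_sol} there. Your piecewise-linear function $F$ is a repackaging of the paper's explicit computation of the return time $r$ in each of its three cases.

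There is one localized error. Your oscillation trap fails at the right endpoint $\tau = a_n + 3\Delta_n$: the clause ``for $u > \tau$ in $I_n$'' is vacuous there, and indeed condition~\ref{cond_A_1} of Proposition~\ref{th_time_no_sol} does not hold because $g_{s,1}$ is strictly \emph{decreasing} on $(a_n+3\Delta_n, a_{n-1}+\Delta_{n-1})$. Consequently your ``degenerate case'' claim $m(s)=s$ is false at $s = a_k + 3\Delta_k = 7\Delta_k$: the solution continues through the gap into $I_{k-1}$, returning to $1$ at $\tau = 20\Delta_k - s = 13\Delta_k = t^*$. The paper avoids this by placing $s = a_k + 3\Delta_k$ in its case~(iii), i.e.\ $s \in [a_k+3\Delta_k, a_{k-1})$, rather than in the trap case~(ii), which is $s \in [a_k+\Delta_k, a_k+3\Delta_k)$. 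Once you reassign this single boundary point to your second sub-case $s \in [7\Delta_k, 8\Delta_k)$, your computation $F(t^*) = s - 7\Delta_k \geq 0$ still yields $\tau \leq t^*$, and $\tau$ lands strictly inside $I_{k-1}$ (since $t^* = 13\Delta_k < 14\Delta_k = a_{k-1}+3\Delta_{k-1}$), so the trap applies and the bound holds---with equality, confirming your remark on sharpness of the constant $9$.
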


\begin{proof}
We start by defining the functions
\begin{equation*}
g_{s, 1} (t) := 1 - \left(  t - s \right) + 2 \sum_{n=1}^\infty \int_s^t \ind_{ \left\{ u \in (a_n + \Delta_n, a_n + 3 \Delta_n] \right\} } \od u
\end{equation*}
and 
\begin{equation*}
g_{s, 2}(t) := 1 - \left( t - s \right)
\end{equation*}
for $t \geq s$. These functions correspond with the functions $g_1$ and $g_2$ defined in \eqref{func_g_k}. Let \newest{$s \leq u < t < 1$} and suppose that the equation \eqref{ode_delayed_oscillation} has a solution on $[s, t]$. Then, similarly as in Lemma \ref{lemma_time_difference}, we observe the following:
\begin{enumerate}[label=(O\arabic*)]
    \item If $u, t \in [a_n - \Delta_{n+1}, a_n + \Delta_n]$ for any $n \in \N$, then 
    \begin{equation*}
    g_s(t) - g_s(u) = g_{s, 2}(t) - g_{s,2}(u) = -(t - u).   
    \end{equation*} \label{osc_obs_1}
    \item If $u, t \in [a_n + \Delta_n, a_n + 3 \Delta_n]$ for any $n \in \N$, then 
    \begin{equation*}
    g_s(t) - g_s(u) = g_{s, 1}(t) - g_{s,1}(u) = t - u.   
    \end{equation*} \label{osc_obs_2}
\end{enumerate}
\revised{These properties follow from that fact that on the intervals $(a_n - \Delta_{n+1}, a_n + \Delta_n]$ the diffusion term is switched off, so $g_s$ is strictly decreasing, but on the intervals $(a_n + \Delta_n, a_n + 3 \Delta_n]$ the diffusion term is switched on, so $g$ is strictly increasing.}

Let $k := \kappa(s)$. We recall that $k > 1$ by assumption. We have the following three possible cases:
\begin{enumerate} [label=(\roman*)]
    \item $s \in [a_k, a_k + \Delta_k)$, \label{osc_lemma_case1}
    \item $s \in [a_k + \Delta_k, a_k + 3 \Delta_k)$, and \label{osc_lemma_case2}
    \item $s \in [a_k + 3 \Delta_k, a_{k-1})$. \label{osc_lemma_case3}
\end{enumerate}
We want to show that in each case one can find an $r \in [s, 1)$ such that $g_s(r) = 1$ and $r \in [a_k + \Delta_k, a_k + 9 \Delta_k)$. We see that the function $g_{s,2}$ is strictly decreasing everywhere and the function $g_{s, 1}$ is non-decreasing only on the set
\begin{equation*}
\left( \bigcup_{n \in \N} [a_n + \Delta_n, a_n + 3 \Delta_n) \right) \cap [s, 1),
\end{equation*}
so we will use Proposition \ref{th_time_no_sol} to show that $m(s) = r$.

\textbf{Case \ref{osc_lemma_case1}:} Since $g_{s, 1}$ is strictly decreasing on $[s, a_k + \Delta_k)$, we know that there exists a solution to \eqref{ode_delayed_oscillation} defined at least up to $a_k + \Delta_k$. By observation \ref{osc_obs_1} \newest{and because \revised{$g_s(s) = 1$}} we notice that
\begin{equation*}
g_s(a_k + \Delta_k) = -(a_k + \Delta_k - s) + g_s(s) = 1 - (a_k + \Delta_k - s) < 1.
\end{equation*}
Let $r := (a_k + \Delta_k) + [(a_k + \Delta_k) - s] = 2(a_k + \Delta_k) - s$. Since 
\begin{equation*}
r \leq 2(a_k + \Delta_k) - a_k = a_k + 2 \Delta_k,
\end{equation*}
it holds that $g_s(t) < 1$ for all $t \in [a_k + \Delta_k, r)$. On the other hand, using observation \ref{osc_obs_2} this time we see that
\begin{align*}
g_s(r) & = r - (a_k + \Delta_k) + g_s(a_k + \Delta_k) \\
& = (a_k + \Delta_k - s) + 1 - (a_k + \Delta_k - s) \\
& = 1,
\end{align*}
so we can apply Proposition \ref{th_time_no_sol} \newest{(A)} to obtain that 
\begin{equation*}
m(s) = r \leq a_k + 2 \Delta_k.
\end{equation*}

\textbf{Case \ref{osc_lemma_case2}:} We can directly apply Proposition \ref{th_time_no_sol} \newest{(A)} to obtain that 
\begin{equation*}
m(s) = s \leq a_k + 3 \Delta_k.
\end{equation*}

\textbf{Case \ref{osc_lemma_case3}:} We use a similar argumentation as in case \ref{osc_lemma_case1}: we have 
\begin{equation*}
g_s(a_{k-1} + \Delta_{k-1}) = -(a_{k-1} + \Delta_{k-1} - s) + g_s(s) = 1 - (a_{k-1} + \Delta_{k-1} - s) < 1.
\end{equation*}
Let $r := (a_{k-1} + \Delta_{k-1}) + (a_{k-1} + \Delta_{k-1}) - s = 2(a_{k-1} + \Delta_{k-1}) - s$. Then
\begin{align*}
g_s(r) & = r - (a_{k-1} + \Delta_k) + g_s(a_{k-1} + \Delta_{k-1}) \\
& = (a_{k-1} + \Delta_{k-1} - s) + 1 - (a_{k-1} + \Delta_{k-1} - s) \\
& = 1.
\end{align*}
Thus by Proposition \ref{th_time_no_sol} \newest{(A)} we obtain that
\begin{align*}
m(s) = r & \leq 2 \left( a_{k-1} + \Delta_{k-1} \right) - a_{k} - 3 \Delta_k \\
& = a_k + 9 \Delta_k. 
\end{align*}

We conclude that 
\begin{equation*}
m(s) \leq a_{\kappa(s)} + 9 \Delta_{\kappa(s)} \to 0
\end{equation*}
as $s \downarrow 0$.

\end{proof}

Now we can proceed to prove Theorem \ref{th_osc_no_sol}.

\begin{proof}[Proof of Theorem \ref{th_osc_no_sol}]

It is sufficient to show that the integral equation \eqref{ode_oscillation} does not have a solution. We \newest{assume} that there does exist a $\delta > 0$ such that the equation \eqref{ode_oscillation} has a solution on $[0, \delta)$.

First, let us \newest{suppose} that 
\begin{enumerate}[label=(\roman*)]
    \item for all $\varepsilon \in (0, \delta)$ the set $\left\{ u \in [0, \varepsilon] \mid g(u) = 1 \right\}$ is infinite.
\end{enumerate}
Then we have a sequence $(s_n)_{n=1}^\infty$ such that $g(s_n) = 1$ for all $n \in \N$ and $s_n \downarrow 0$. However, by Lemma \ref{lemma_delayed_osc} we have $m(s_n) \to 0$ as $n \to \infty$, which implies that there is an $n_0 \in \N$ such that $m(s_{n_0}) < \delta$, which is a contradiction. 

Therefore, there are only finitely many $t \in [0, \delta)$ where $g(t) = 1$. But then there is an $s \in (0, \delta)$ such that either
\begin{enumerate}[label=(\roman*)]
\setcounter{enumi}{1}
    \item $g(t) > 1$ or \label{osc_proof_case_2}
    \item $g(t) < 1$
\end{enumerate}
for all $t \in (0, s]$. In case \ref{osc_proof_case_2} we have $0 < g(t) - 1 = g_2(t) - g_2(0) < 0$, where $g_2(t) = 1 - t$ for $t \in [0, 1]$, which is a contradiction. In the second case we have $g(t) = g_1(t)$ for all $t \in [0, s]$. However, now
\begin{equation*}
\infty > \# \left\{ u \in [0, s] \mid g_1(u) = 1 \right\} = \infty,
\end{equation*}
which is a contradiction.

This shows that the equation \eqref{ode_oscillation} does not have a solution. Since the moment function of any $L_2$-solution to \eqref{sde_oscillation_base} solves \eqref{ode_oscillation}, we conclude that the SDE \eqref{sde_oscillation_base} does not have an $L_2$-solution.

\end{proof}

\section*{Discussion} \label{section_discussion}
%
%
\revised{

In this article we restrict ourselves to the case where only the diffusion coefficient $\sigma$ has a discontinuity in the measure component. Regularity in the drift coefficient reduces the complexity of the equations we study and makes it easier to focus on understanding the effects caused by the discontinuity in the measure component of the diffusion term. However, one could consider a similar discontinuity also in the drift coefficient, that is, assume that 
\begin{equation*}
b(t, x, \mu, \alpha) = \sum_{j \in \mathcal{J}} \ind_{ \left\{ \alpha \in \cB_j \right\} } b_j(t, x, \mu).
\end{equation*}
One usual way to handle discontinuity in the drift coefficient is to use the Girsanov theorem -- see for example \cite{bauer_irregular_drift} or \cite{veretennikov79}, where the drift coefficient is discontinuous in the space variable --, but in our case this might change the law of $X_t$ and possibly $\E \left\Vert X_t - z \right\Vert^p$ so that this might not work. The more direct way would be to treat the switching of the drift coefficient similar to the switching of the diffusion coefficient and examine the behavior of the $L_p$-moments. Here the two switching regimes would interact with each other and this is a subject for future research and outside the scope of this article.

The standard assumptions \ref{standard_assumption_first}-\ref{standard_assumption_last} were chosen to ensure that the statement of Theorem \ref{th_strong_existence} holds in multiple dimensions. The linear growth condition \ref{standard_assumption_2} is used repeatedly in the proofs, and relaxing it would require to replace the Gronwall-type arguments with different and possibly more complicated arguments. The Lipschitz continuity assumption \ref{standard_assumption_1} is only used to obtain the existence and uniqueness of equations \eqref{eq_sde_initial_data}, so provided that the statement of Theorem \ref{th_strong_existence} would still hold, one could replace the Lipschitz continuity in the space variable with a weaker assumption.
}

\section*{Acknowledgement}
%
%
The author thanks Christel Geiss and Stefan Geiss for helpful discussions. The author is also grateful to the reviewers for their comments and suggestions to improve the article.

\bibliographystyle{plain}
%
%

\end{document}